\tikzstyle{nodino}=[circle,draw,fill,inner sep=0pt]
\tikzstyle{nodo}=[circle,draw,fill,inner sep=0pt,minimum size=%
\tikzstyle{infinito}=[circle,inner sep=0pt,minimum size=0mm]
 \newcommand{\eps}{{\varepsilon}}
\newcommand\R{{\mathbb R}}
\newcommand\Hmu{{H_\mu^1}}
\newcommand\Hr{{H_{\mu,r}^1}}
\newcommand\T{\mathcal T}
\newcommand\To{{\mathcal T}_o}
\newcommand\f{\frac}
\newcommand\dx{{\,dx}}
\newcommand\elevel{{\mathcal L}}
\newcommand\eign{\lambda_1}
\newcommand\musTor{\mu^*_{\To,r}}
\newcommand\musTr{\mu^*_{\T,r}}
\newcommand\musTo{\mu^*_{\To}}
\newcommand\musT{\mu^*_{\T}}
\newcommand\musG{\mu^*_{\G}}
\newcommand\musGr{\mu^*_{\G,r}}
\newcommand\G{\mathcal G}
\newcommand\B{\mathcal B}
\newcommand\C{\mathcal C}
\newcommand\be{\begin{equation}}
\newcommand\ee{\end{equation}}
\newcommand\radice{o}
\newtheorem{theorem}{Theorem}[section]
\newtheorem{proposition}[theorem]{Proposition}
\newtheorem{lemma}[theorem]{Lemma}
\newtheorem{corollary}[theorem]{Corollary}
\theoremstyle{remark}
\newtheorem{remark}[theorem]{Remark}
\newtheorem*{remark*}{Remark}
\theoremstyle{definition}
\date{}
\title{NLS ground states on metric trees: existence results and open questions}
\author{Simone Dovetta\thanks{We acknowledge that the present research has been partially supported by MIUR grant Dipartimenti di Eccellenza 2018-2022 (E11G18000350001)}, Enrico Serra,
Paolo Tilli \\ \ \\{\small  Dipartimento di Scienze
Matematiche ``G.L. Lagrange'', Politecnico di Torino } \\ {\small
Corso Duca degli Abruzzi, 24, 10129 Torino, Italy}}
\begin{document}

\maketitle

\begin{abstract} We consider the minimization of the NLS energy on a metric tree, either rooted or unrooted, subject to a mass constraint. With respect to the same problem on other types of metric graphs, several new features appear, such as the existence of minimizers with positive energy, and the emergence of unexpected threshold phenomena.  We also study the problem with a radial symmetry constraint that is in principle different from the free problem due to the failure of the P\'olya--Szeg\H{o} inequality for radial rearrangements. A key role is played by a new Poincar\'e inequality with remainder.

\end{abstract}

\noindent{\small AMS Subject Classification: 35R02, 35Q55, 49J40, 58E30.
}
\smallskip

\noindent{\small Keywords: Minimization, metric tree, Poincar\'e inequality with remainder,
 nonlinear Schr\"odinger Equation.}

\section{Introduction}
In recent years, a large and still increasing interest has been devoted to the investigation of nonlinear dynamics on metric graphs or networks. Conceiving graphs as a meaningful model of ramified structures, and driven by physical applications, thorough investigations have been carried out first for {\em nonlinear Schr\"odinger equations} (NLS) (see for instance \cite{ACFN1,ACFN2,PS} and the review \cite{N}), and more recently also for {nonlinear Dirac equations} (see \cite{BCT,BCT1}).

Particularly, efforts have been focused on the analysis of {\em standing waves}, i.e. solutions of the corresponding stationary equations. Within this framework, there has been an intensive study of the existence
of mass-constrained ground states for the NLS energy, that is, global minimizers of the energy among functions of prescribed $L^2$ norm. This problem has been initially considered in the case of graphs made of a core of finitely
many bounded edges, and a finite number of unbounded edges (halflines)
attached to it, and this setting is nowadays quite well understood (we refer to \cite{AST-CV,AST-JFA,AST-CMP} for the nonlinearity extended to the whole graph, and to \cite{DT-p,DT, ST-JDE,ST-NA,T-JMAA} for the nonlinearity concentrated on the sole compact core). Similar results have then been accomplished also in the case of compact graphs (\cite{CDS,D-jde,MP-amre16}).

More recently,
however, another interesting class of graphs has been investigated,
where
noncompactness is no longer due to the presence of unbounded edges, but rather
to the infinite number of bounded edges, arranged to form an infinite periodic structure (\cite{AD,ADST,D-nodea,Pa}). A prototypical case study can
be found in \cite{ADST}, where the graph is
a planar grid with vertices on the lattice ${\mathbb Z}^2$, connected
by vertical and horizontal edges of length one: the main interesting
feature is that, even though such an ambient space is of course one-dimensional
(at least locally), at large scales the overall behaviour turns out to be
two-dimensional, to the extent that a Sobolev inequality holds true, formally identical
to the Sobolev inequality in $\R^2$. This makes the functional analysis quite rich
and interesting if compared to classical graphs with halflines, with several unexpected
consequences on the ground-state problem, such as a continuum of critical exponents.

\begin{figure}
\begin{center}
\begin{tikzpicture}[xscale= 1,yscale=1]
\begin{scope}[xscale= 0.35,yscale=0.35,grow=south,thick,
%shape=rectangle, %thick,
%level distance=23mm%,cap=round
]
\tikzstyle{level 1}=[level distance=10em]
\tikzstyle{level 2}=[sibling distance=14em,level distance=7em]
\tikzstyle{level 3}=[sibling distance=8em,level distance=9.1em]
\tikzstyle{level 4}=[sibling distance=3.5em,level distance=5em]
\node at (0,12em) [nodo] {}   child {node [nodo] {}
node [nodo] {} child foreach \x in {0,1}
{node [nodo] {} child foreach \y in {0,1}
{node [nodo] {} child [dashed] foreach \z in {0,1}}}};
\end{scope}
\begin{scope}[xshift=17em,grow cyclic,shape=circle, thick,
level distance=2.7em,%rotate=45,
                  cap=round]
\tikzstyle{level 1}=[rotate=-90,sibling angle=120]
\tikzstyle{level 2}=[sibling angle=85]
\tikzstyle{level 3}=[sibling angle=57]
\tikzstyle{level 4}=[sibling angle=60]
%\tikzstyle{every node}=[fill]
\tikzstyle{edge from parent}=[draw]
\node at (0,0) [nodo] {} child  foreach \A in {1,2,3}
    { node [nodo]
       {} child  foreach \B in {1,2}
        { node [nodo] {} child   foreach \C in {1,2}
           { node [nodo] {} child [dashed,level distance=1.5em]  foreach \C in {1,2} }
        }
    };
\node at (-20em,7em) {(a)};
\node at (-10em,7em) {(b)};
\end{scope}
\end{tikzpicture}
\end{center}
\caption{(a) The rooted tree $\To$. (b) The unrooted tree $\T$.}\label{Figalberi}
\end{figure}
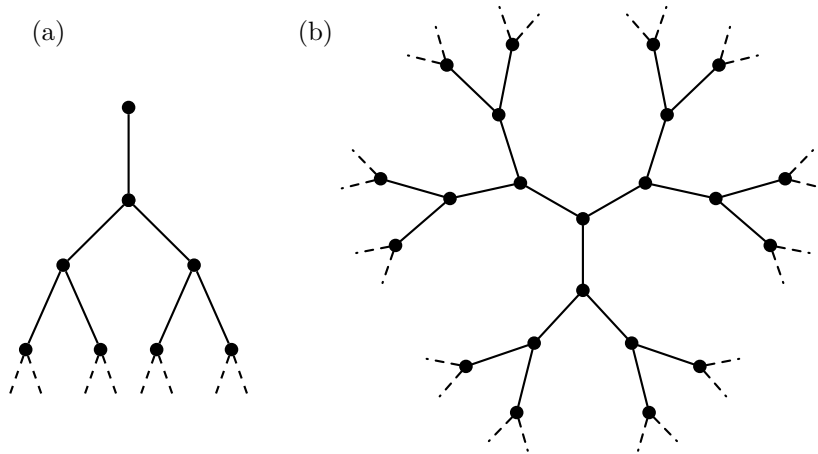

In this paper we consider the case where the graph is a \emph{binary tree} (either rooted or unrooted)
made up of infinitely many edges of length one, so that every vertex has degree
three (except of course for the root which, if present, has degree one).
These two graphs, denoted respectively by $\To$ and $\T$, are depicted in
Figure~\ref{Figalberi}.

Among various topics on quantum graphs, metric trees have been gathering a significant interest since the early years. Specifically, many efforts have been devoted to the analysis of differential operators on such graphs, with a prominent focus on spectral properties. 
The study of Hardy-type integral operators on trees started in \cite{evans0,evans} within the investigation of the Neumann Laplacian on irregular domains, whereas \cite{carlson} was the first paper to unravel the gap-structure of the Neumann spectrum on homogeneous trees. Later, several papers studied the spectrum of Schr\"odinger and Laplacian operators on trees (\cite{carlson1,naymark,sobolev,solomyak} and references therein).

Although
the
Laplacian on such graphs
has been intensively studied, the problem of
ground states
for the NLS energy
\begin{equation}
\label{nlse}
E(u,\G):=\frac 1 2 \int_\G|u'|^2\dx-\f1p\int_\G |u|^p\dx
\end{equation}
(where $\G=\To$ or $\G=\T$) has not yet been investigated (up to our knowledge, the only discussion of nonlinear issues on trees is the one in \cite{BC}, which is not related to the problem we consider here). In fact, as we will see, several interesting phenomena
arise, together with some basic questions that remain open and call for further investigation.

Given a metric graph $\G$, a number $p\in (2,6)$ and
a mass $\mu>0$, the ``ground state problem'' on $\G$ is the minimization problem
\begin{equation}
\label{GSP}
\min_{u\in \Hmu(\G)} E(u,\G),
\end{equation}
where $E(u,\G)$ is as in \eqref{nlse} and
$\Hmu(\G)$ is the class of mass-constrained functions
	\begin{equation}\label{defHumu}
	\Hmu(\G):=\left\{u\in\ H^1(\G)\,:\,\int_\G
	|u|^2\dx=\mu\right\}.
	\end{equation}
Any solution to this problem, i.e. any function $u\in \Hmu(\G)$ achieving the
ground-state energy level
	\begin{equation}
	\label{elevel}
	\elevel_\G(\mu):=\inf_{u\in \Hmu(\G)}E(u,\G),\quad \mu\geq 0,
	\end{equation}
is called a \emph{ground state} of mass $\mu$: throughout this paper, we shall be
concerned with problem \eqref{GSP} and related questions, when $\G$ is either $\To$ or $\T$.

Contrary to graphs with halflines and to periodic graphs, where the ground-state energy defined in \eqref{elevel}
is always \emph{nonpositive} (and typically negative in
mass regimes where ground states exist), when $\G=\To$ (or $\T$)
$\elevel_\G(\mu)$ is \emph{strictly positive} if $\mu$ is small enough and,
more interestingly, there are always
mass regimes $\mu$ where ground states exist even though $\elevel_\G(\mu)>0$.
This is so because, as is well known, $\G$  supports
a Poincar\'e inequality
\begin{equation}
\label{poinc1}
\int_{\G} |u'(x)|^2\,dx \geq \lambda_1 \int_{\G} |u(x)|^2\,dx\quad\forall u\in H^1(\G),
\end{equation}
where the best constant
\begin{equation}
\label{rayleigh}
\eign:=\inf_{\substack{u\in H^1(\T)\\
u\not\equiv 0}}\frac{\int_\T|u'|^2\dx}{\int_\T|u|^2\dx}=
\inf_{\substack{u\in H^1(\To)\\
u\not\equiv 0}}\f{\int_{\To}|u'|^2\dx}{\int_{\To}|u|^2\dx}>0
\end{equation}
is the same for both $\T$ and $\To$, as it is very simple to see. Thus, if $u$ has mass $\mu$,
the kinetic term in \eqref{nlse}
is not smaller than
$\lambda_1\mu/2$
and,
for small $\mu$, this positive lower bound prevails over
the second integral and governs the behaviour of $\elevel_\G(\mu)$. More precisely, we have the following result.
\begin{theorem}[energy-level function]
		\label{THM0}
		Let $\G=\T$ (or $\G=\To$) and $p\in (2,6)$. The function $\elevel_\G(\mu)$
in \eqref{elevel}  is concave,  satisfies $\elevel_\G(0)=0$,
\begin{equation}
\label{sottoretta}
\elevel_\G (\mu) \le \frac12 \lambda_1\mu\qquad \forall \mu \ge 0,
\end{equation}	
and is differentiable 
at $\mu=0^+$ with
\begin{equation}\label{derivL}
\elevel'_\G(0^+)=\frac {\lambda_1}2>0,
\end{equation}
so that $\elevel_\G(\mu)$ is strictly positive and increasing if $\mu$
is small enough. Moreover, as $\mu$ increases, $\elevel_\G(\mu)$ achieves
an absolute maximum, after which it becomes decreasing and eventually
negative.
\end{theorem}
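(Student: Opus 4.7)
The strategy is to rewrite everything through the scaling $u=\sqrt\mu\,v$, which gives
\[
\elevel_\G(\mu)=\inf_{\int_\G|v|^2\dx=1}g_v(\mu),\qquad g_v(\mu):=\frac{\mu}{2}\int_\G|v'|^2\dx-\frac{\mu^{p/2}}{p}\int_\G|v|^p\dx.
\]
Each $g_v$ is smooth and concave on $(0,\infty)$ because $p>2$, and the elementary inequality $t^{p/2}\le t$ for $t\in[0,1]$ gives $g_v(t\mu)\ge t g_v(\mu)$, which together with $g_v(0)=0$ extends concavity to the closed half-line $[0,\infty)$. As an infimum of concave functions, $\elevel_\G$ is thus concave, and $\elevel_\G(0)=0$ is trivial. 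The bound $\elevel_\G(\mu)\le\lambda_1\mu/2$ is obtained by discarding the nonpositive $L^p$-term in $g_{v_n}(\mu)$, where $v_n$ is any unit $L^2$-norm minimising sequence for the Rayleigh quotient defining $\lambda_1$ in \eqref{rayleigh}.

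Concavity together with $\elevel_\G(0)=0$ makes $\mu\mapsto\elevel_\G(\mu)/\mu$ nonincreasing on $(0,\infty)$, so the right derivative $\elevel'_\G(0^+)=\lim_{\mu\to 0^+}\elevel_\G(\mu)/\mu$ exists, and the upper bound already yields $\elevel'_\G(0^+)\le\lambda_1/2$. For the matching lower bound I would invoke the standard one-dimensional Gagliardo--Nirenberg inequality, which extends edge-by-edge to any metric graph:
\[
\int_\G|v|^p\dx\le C\Bigl(\int_\G|v'|^2\dx\Bigr)^{(p-2)/4}\Bigl(\int_\G|v|^2\dx\Bigr)^{(p+2)/4}.
\]
For $v$ of unit $L^2$-mass, setting $s:=\int_\G|v'|^2\dx\ge\lambda_1$ (by \eqref{poinc1}), this bounds $g_v(\mu)/\mu$ from below by $s/2-c\mu^{p/2-1}s^{(p-2)/4}$, which, since $p<6$, is strictly increasing in $s$ on $[\lambda_1,\infty)$ for $\mu$ small enough. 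Evaluating at $s=\lambda_1$ yields $\elevel_\G(\mu)\ge\lambda_1\mu/2-C\mu^{p/2}$, and with $p>2$ this gives $\elevel'_\G(0^+)\ge\lambda_1/2$, completing \eqref{derivL}. Strict positivity and initial monotonicity of $\elevel_\G$ then follow at once from concavity and $\elevel'_\G(0^+)>0$.

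For the remaining claims, fix any nontrivial $u_0\in H^1(\G)$ supported on a single edge and rescale to mass $\mu$ via $u_\mu:=\sqrt{\mu/\int_\G|u_0|^2\dx}\,u_0\in\Hmu(\G)$: a direct computation gives $E(u_\mu,\G)=A\mu-B\mu^{p/2}$ with $A,B>0$, so $\elevel_\G(\mu)\to-\infty$ as $\mu\to\infty$ because $p>2$. Together with concavity (which forces continuity on $(0,\infty)$), the positive behaviour near $0$, and this divergence, this forces the supremum of $\elevel_\G$ to be attained at some interior point $\mu^\star\in(0,\infty)$; concavity then makes $\elevel_\G$ nondecreasing on $[0,\mu^\star]$ and nonincreasing, in fact eventually strictly decreasing, on $[\mu^\star,\infty)$. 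The substantive step, and the only one beyond pure bookkeeping from concavity, is the Gagliardo--Nirenberg-based lower bound in the second paragraph: the restriction $p\in(2,6)$ is exactly what keeps the $L^p$-correction subdominant to the kinetic term near $\mu=0$, while allowing it to dominate at large $\mu$.
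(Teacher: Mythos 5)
Your proof is correct and follows essentially the same route as the paper's: the scaling reformulation exhibiting $\elevel_\G$ as a lower envelope of concave functions of $\mu$, the Rayleigh-quotient test functions for the upper bound \eqref{sottoretta}, the Gagliardo--Nirenberg inequality combined with the Poincar\'e inequality (and $p<6$) for the matching lower bound on $\elevel'_\G(0^+)$, and a fixed rescaled test function plus concavity for the behaviour at large mass. The only cosmetic difference is that you minimize an explicit function of $s=\|v'\|_2^2$ over $[\lambda_1,\infty)$, whereas the paper absorbs the factor $\|u'\|_2^{(p-2)/2}$ into $\|u'\|_2^2$ before applying Poincar\'e; both are the same estimate.
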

Furthermore, at least when $p\in [4,6)$, not only does $\elevel_\G(\mu)$ detach
from zero with a slope equal to $\lambda_1/2$, it is in fact a \emph{linear function}
of $\mu$ up to a certain threshold $\musG$, which is also the precise
mass threshold beyond which ground states exist:
	\begin{theorem}
		\label{THM 1}
		Assume $\G=\T$ (or $\G=\To$) and $p\in (2,6)$, and define
\begin{equation}
\label{critmass}
\musG := \max\left\{\mu \ge 0\,:\,\elevel_\G(\mu)=\f12\eign\mu\right\},
\end{equation}
so that
		\begin{equation}
			\label{inf}
			\elevel_\G(\mu)\,\,\,\begin{cases}
				\,\,=\displaystyle \f12\eign\mu & \quad\text{if $\mu\in [0,\musG]$}\\{}\\
				\,\,<\displaystyle \f12\eign\mu & \quad\text{if $\mu>\musG$.}
			\end{cases}
		\end{equation}
Then
ground states of mass $\mu$ exist if $\mu>\musG$, whereas they do not exist if $\mu\in (0,\musG)$.
Moreover:
		\begin{itemize}
			\item[(i)] if $p\in[4,6)$ then $\musG>0$, so that at $\mu=\musG$
there is a genuine transition from nonexistence to existence of ground states;
			\item[(ii)] if $p\in (4,6)$, then ground states exist also
when $\mu=\musG$.
		\end{itemize}
	\end{theorem}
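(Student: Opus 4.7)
The plan is as follows. First, the dichotomy \eqref{inf} follows from convex analysis: $g(\mu):=\tfrac12\eign\mu-\elevel_\G(\mu)$ is convex (affine minus the concave $\elevel_\G$ from Theorem~\ref{THM0}), nonnegative by \eqref{sottoretta}, vanishes at $\mu=0$, and tends to $+\infty$ since $\elevel_\G$ is eventually negative; its zero set is therefore a closed bounded interval $[0,\musG]$. For the nonexistence part on $(0,\musG)$, I argue by scaling: if some $u\in\Hmu(\G)$ with mass $\mu_0\in(0,\musG)$ attained $\elevel_\G(\mu_0)=\tfrac12\eign\mu_0$, then $\int_\G|u|^p\dx>0$ because $u\not\equiv 0$, and for any $\lambda>1$ the rescaled $\lambda u$ would satisfy
\[
E(\lambda u,\G)=\lambda^2 E(u,\G)-\frac{\lambda^p-\lambda^2}{p}\int_\G|u|^p\dx<\tfrac12\eign\lambda^2\mu_0;
\]
choosing $\lambda$ so close to $1$ that $\lambda^2\mu_0<\musG$ contradicts the dichotomy.

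The existence part for $\mu>\musG$ is the core of the theorem. I would take a minimizing sequence $\{u_n\}\subset\Hmu(\G)$ and observe that the strict inequality $\elevel_\G(\mu)<\tfrac12\eign\mu$, together with \eqref{poinc1}, gives
\[
\tfrac1p\int_\G|u_n|^p\dx=\tfrac12\int_\G|u_n'|^2\dx-E(u_n,\G)\ge\tfrac12\eign\mu-E(u_n,\G),
\]
whose right-hand side tends to $\tfrac12\eign\mu-\elevel_\G(\mu)>0$, so the $L^p$ mass stays uniformly bounded below. Exploiting the tree's automorphism group (vertex-transitive on $\T$, with analogous shifts across subtrees of $\To$), one translates each $u_n$ so that its $L^p$ mass concentrates near a fixed vertex, then extracts a nontrivial weak $H^1$ limit $u$. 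A Brezis--Lieb-type splitting
\[
\|u_n\|_2^2=\|u\|_2^2+\|u_n-u\|_2^2+o(1),\qquad E(u_n,\G)=E(u,\G)+E(u_n-u,\G)+o(1)
\]
combined with a strict subadditivity $\elevel_\G(\mu)<\elevel_\G(\nu)+\elevel_\G(\mu-\nu)$ for $\nu=\|u\|_2^2\in(0,\mu)$—a consequence of the concavity of $\elevel_\G$ and of $\elevel_\G(\mu)<\tfrac12\eign\mu$—then rules out any mass loss, yielding $u_n\to u$ strongly in $L^2\cap L^p$ and $u$ a ground state. The main obstacle is the tree's noncompactness: although it is topologically one-dimensional, its exponential volume growth and the failure of vertex-transitivity in the rooted case $\To$ make the concentration-compactness analysis delicate, and a careful exploitation of the self-similar structure of the subtrees (especially for $\To$) is what will keep the argument honest.

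The threshold result (i) rests on the Poincar\'e inequality with remainder announced in the abstract, expected to take the form
\[
\int_\G|u|^p\dx\le C\,\|u\|_2^{p-2}\Bigl(\int_\G|u'|^2\dx-\eign\int_\G|u|^2\dx\Bigr),
\]
established at least for $p=4$ and extended by interpolation to $p\in[4,6)$. Plugging this into the energy, for $u\in\Hmu(\G)$ with $\mu$ small one gets
\[
E(u,\G)\ge\tfrac12\int_\G|u'|^2\dx-\tfrac{C\mu^{(p-2)/2}}{p}\Bigl(\int_\G|u'|^2\dx-\eign\mu\Bigr)\ge\tfrac12\eign\mu,
\]
provided $C\mu^{(p-2)/2}/p\le\tfrac12$; together with \eqref{sottoretta} this forces $\elevel_\G(\mu)=\tfrac12\eign\mu$ for small $\mu$, hence $\musG>0$. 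For (ii), I would take $\mu_n\downarrow\musG$ and ground states $u_n$ of mass $\mu_n$ (which exist by the first part), and rerun the compactness analysis of the previous paragraph on $\{u_n\}$. Since the strict inequality is unavailable at $\mu=\musG$, the strict nonlinearity exponent $p>4$—coupled with the Poincar\'e-with-remainder estimate, which fails to be useful in the critical case $p=4$—plays the role of securing the strict subadditivity needed to rule out mass loss in the limit, producing a ground state of mass $\musG$.
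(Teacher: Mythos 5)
Your treatment of \eqref{inf} and of nonexistence on $(0,\musG)$ is sound and matches the paper's route (the scaling identity you use is exactly the paper's inequality \eqref{piega2} in disguise). The serious gaps are in the existence claims. First, for $\mu>\musG$ your concentration argument is only viable on $\T$, where the isometry group acts transitively on edges; on $\To$ there are no ``analogous shifts across subtrees'' --- an isometry of $\To$ must fix the root (the unique vertex of degree one), so you cannot recenter a minimizing sequence whose mass escapes down the tree. You flag this as ``delicate'' but do not resolve it, and it is precisely the hard point. The paper's resolution (Proposition~\ref{conv}, Part~2) is a surgery: if $u_n\rightharpoonup 0$ in $H^1(\To)$, one corrects $u_n$ to vanish at the root, grafts it onto $\T$, concludes that a ground state of the same mass would exist on $\T$, and derives a contradiction with the strict comparison $\elevel_{\To}(\mu)<\elevel_\T(\mu)$ of Lemma~\ref{lemmacfr}. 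Some such idea is indispensable and is absent from your proposal. (Your strict subadditivity, by contrast, is fine once you also invoke $\elevel_\G'(0^+)=\eign/2$ from Theorem~\ref{THM0}: equality in subadditivity would force $\elevel_\G$ to be linear on $[0,\mu]$ with slope $\eign/2$, contradicting $\elevel_\G(\mu)<\tfrac12\eign\mu$.)

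Second, your mechanism for part (ii) cannot work as stated: on $[0,\musG]$ the level function \emph{is} linear, so $\elevel_\G(\musG)=\elevel_\G(\nu)+\elevel_\G(\musG-\nu)$ for every $\nu\in(0,\musG)$ and no strict subadditivity is available at $\mu=\musG$. Splitting is in fact never the enemy --- it is excluded by the strict scaling inequality \eqref{piega2} regardless of linearity --- the enemy is \emph{vanishing}, and what must be shown is $\liminf_n\|u_n\|_{L^\infty(\G)}>0$ for a minimizing sequence at mass $\musG$. The paper does this (Lemma~\ref{gsmcrit}) by taking ground states $v_n$ of mass $\mu_n\downarrow\musG$ and reading off the a priori bound $1\le C\|v_n\|_{L^\infty(\G)}^{p-4}\mu_n$ from \eqref{tesisottoretta}, which for $p>4$ yields a uniform positive lower bound on $\|v_n\|_{L^\infty(\G)}$; this is the concrete role of $p>4$, not subadditivity. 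Finally, for (i) you posit the remainder inequality $\|u\|_p^p\le C\|u\|_2^{p-2}\bigl(\|u'\|_2^2-\eign\|u\|_2^2\bigr)$ without proof. The inequality the paper actually establishes (Theorem~\ref{teopoinc}, via a duplication/surgery construction) is $\|u'\|_2^2-\eign\|u\|_2^2\ge C\|u\|_{L^\infty(\G)}^2$; your form follows from it for $p=4$ (since $\|u\|_4^4\le\|u\|_\infty^2\|u\|_2^2$), and for $p\in(4,6)$ only after noting that the remainder is bounded in the regime $E(u,\G)\le\tfrac12\eign\mu$ with $\mu$ small. As it stands, the key analytic input of the whole theorem is assumed rather than proved.
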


\begin{remark}\label{remopen}
  When $p\in (2,4)$ we are not able to prove that $\musG>0$: if, as we believe,
  this was the case, then at $\mu=\musG$
  a transition would occur from nonexistence to existence of ground states (exactly
  as when $p\in [4,6)$), while, if $\musG=0$, then
  ground states would exist for every mass.
 Finally, when $p=4$, we do not know whether ground states exist or not, with a mass $\mu=\musG$.
\end{remark}

Another interesting issue concerns \emph{symmetries}.
A function $u\in H^1(\To)$ is called {\em radial} if its value at any point
$x\in\To$
depends only on $|x|$, the geodesic distance of $x$ from the root $\radice$ of the tree
(the same definition applies to functions $u\in H^1(\T)$,  choosing as  $\radice$
an arbitrary vertex of $\T$).
Unfortunately,   the usual  techniques of radial symmetrization (see \cite{AST-CV})
are not effective in this setting,
and it remains an \emph{open problem}
to establish  whether ground states (on $\To$ or on $\T$)
are necessarily radial functions.
Indeed,  given a
nonnegative function $u\in H^1(\To)$, one
may consider its \emph{radial rearrangement} $u^*\in H^1(\To)$, i.e.
the unique radial function $u^*$ which is
equimeasurable with $u$ (see e.g. \cite{AST-CV}). Since the passage from $u$ to $u^*$ preserves
  the mass and the $L^p$ norm (in fact, every $L^r$ norm),
the radiality of ground states would follow,
 if only one could rely on the usual P\'olya--Szeg\H{o}
  inequality
\begin{equation}
  \label{PSineq}
  \int_{\To} \left|(u^*)'(x)\right|^2\,dx\leq\int_{\To} |u'(x)|^2\,dx\quad\forall u\in H^1(\To).
\end{equation}
Unfortunately, however, this
standard construction of radial competitors  is not useful
because, as
simple examples
show, \eqref{PSineq} is not valid in general (the main reason is
that metric balls in $\To$ fail to be isoperimetric sets,
but we shall not pursue this issue here).

In the light of these facts, it is natural to define
\begin{equation}\label{defHunomur}
\Hr(\G):=\{\,u\in\Hmu(\G)\,:\,\text{$u$ is radial}\,\}
\end{equation}
where $\G=\To$ or $\G=\T$,
and study the minimization problem
\begin{equation}
\label{GSPrad}
\min_{u\in \Hr(\G)} E(u,\G),
\end{equation}
i.e. the ground state problem \emph{restricted} to radial functions. Clearly, if
a function  $u$ that solves \eqref{GSP} was radial, then it would also solve
\eqref{GSPrad} (and if all ground states were radial functions, then the
two problems would be equivalent). Since, however, the radiality of ground states is
an open issue, the two problems should be considered as distinct, until proven otherwise.
Thus, also for the radial problem \eqref{GSPrad}
we will study the level function
	\begin{equation}
	\label{radlevel}
	\elevel_{\G,r}(\mu):=\inf_{u\in\Hr(\G)}E(u,\G),\quad\mu\geq 0,
	\end{equation}
and we will call  \emph{radial ground state} (of mass $\mu$)
any function $u\in \Hr(\G)$ that achieves the infimum in \eqref{radlevel}.

Our results for the radial problem are similar to, but actually more precise than,
the corresponding statements for the non-radial case. In particular,
in the radial case the mass threshold below which ground state do not exist (and
the level function is linear) is guaranteed to be strictly positive
for \emph{every} $p\in (2,6)$, not just for $p\in (4,6)$ as in Theorem~\ref{THM 1}.
	\begin{theorem}
		\label{THM 2}
Let $\G=\T$ (or $\G=\To$) and $p\in (2,6)$. Then, defining
\begin{equation}
\label{critmassrad}
\musGr := \max\left\{\mu \ge 0\,:\,\elevel_{\G,r}(\mu)=\f12\eign\mu\right\}
\end{equation}
there holds $\musGr>0$,
and the function $\elevel_{\G,r}(\mu)$ in \eqref{radlevel} is
concave and such that
\[
\elevel_{\G,r}(\mu)\,\,\,\begin{cases}
\,\,=\displaystyle \f12\eign\mu & \quad\text{if $\mu\in [0,\musGr]$}\\[1.1em]
\,\,<\displaystyle \f12\eign\mu & \quad\text{if $\mu>\musGr$}\\[1em]
\,\,<0 & \quad\text{if $\mu$ is large enough.}
\end{cases}
\]
Finally,
radial ground states of mass $\mu$ 
exist if $\mu \geq \musGr$, whereas they do not exist if $\mu\in(0,\musGr)$.
\end{theorem}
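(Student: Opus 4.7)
Writing any $u\in\Hr(\G)$ of mass $\mu$ as $u=\sqrt{\mu}\,v$ with $\|v\|_2^2=1$, one has
$$E(\sqrt{\mu}\,v,\G)=\tfrac{\mu}{2}\|v'\|_2^2-\tfrac{\mu^{p/2}}{p}\|v\|_p^p,$$
which is concave in $\mu$ for each fixed radial $v$, since $p>2$. Hence $\elevel_{\G,r}$ is a pointwise infimum of concave functions, and is itself concave. The upper bound $\elevel_{\G,r}(\mu)\le\tfrac{1}{2}\eign\mu$ follows by testing on a sequence of radial functions whose Rayleigh quotient tends to $\eign$ and whose $L^p/L^2$-ratio simultaneously tends to zero; such sequences exist because $\eign$ is the bottom of the continuous spectrum (not attained in $H^1$), and can be explicitly built by spreading mass far from the root of the tree. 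Eventual negativity for large $\mu$ is obtained by evaluating $E(\sqrt{\mu}\,v,\G)$ on any fixed radial $v$ of unit mass with $\|v\|_p>0$: the $-\mu^{p/2}/p$ term eventually dominates.

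\textbf{The Poincar\'e inequality with remainder; positivity of $\musGr$.}
The heart of the proof is the new Poincar\'e inequality with remainder announced in the abstract: for every radial $u\in H^1(\G)$,
$$\int_\G|u'|^2\dx\;\ge\;\eign\int_\G|u|^2\dx\;+\;\Phi(u),$$
with $\Phi(u)\ge 0$ vanishing only at $u\equiv 0$ and -- together with a Gagliardo--Nirenberg-type interpolation -- dominating $\tfrac{2}{p}\|u\|_p^p$ whenever $\|u\|_2^2$ is small enough. The proof reduces the radial problem to a weighted one-dimensional problem on $[0,\infty)$ (weight $\rho\sim 2^r$) and conjugates away the formal non-$L^2$ minimizer of $\eign$ to extract a Hardy-type remainder. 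Granted this, for $\mu\le\delta$ small, every radial $u$ of mass $\mu$ satisfies
$$E(u,\G)\;\ge\;\tfrac{1}{2}\eign\mu+\tfrac{1}{2}\Phi(u)-\tfrac{1}{p}\|u\|_p^p\;\ge\;\tfrac{1}{2}\eign\mu,$$
which combined with the upper bound gives $\elevel_{\G,r}(\mu)=\tfrac{1}{2}\eign\mu$ on $[0,\delta]$, whence $\musGr\ge\delta>0$. The maximality in the definition of $\musGr$, together with the upper bound, then yields $\elevel_{\G,r}(\mu)<\tfrac{1}{2}\eign\mu$ strictly for $\mu>\musGr$.

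\textbf{Existence for $\mu\ge\musGr$ and nonexistence for $\mu<\musGr$.}
For $\mu>\musGr$, I would apply the direct method. A minimizing sequence $u_n\in\Hr(\G)$ is bounded in $H^1(\G)$ (by Gagliardo--Nirenberg plus the energy bound), so up to a subsequence it converges weakly in $H^1$ to some radial $u$ and strongly in $L^p$, by the compact embedding $\Hr(\G)\hookrightarrow L^p(\G)$ -- a purely radial phenomenon, caused by the exponential tree weight penalising escape of mass to infinity. A decomposition $u_n=u+(u_n-u)$ combined with Poincar\'e applied to $u_n-u$ yields
$$\elevel_{\G,r}(\mu)\;\ge\;E(u)+\tfrac{1}{2}\eign(\mu-\|u\|_2^2)\;\ge\;\elevel_{\G,r}(\|u\|_2^2)+\tfrac{1}{2}\eign(\mu-\|u\|_2^2),$$
while concavity together with the strict bound $\elevel_{\G,r}(\mu)<\tfrac{1}{2}\eign\mu$ produces the strict subadditivity $\elevel_{\G,r}(\mu)<\elevel_{\G,r}(\nu)+\tfrac{1}{2}\eign(\mu-\nu)$ for every $\nu\in[0,\mu)$, forcing $\|u\|_2^2=\mu$, so $u$ is a ground state. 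The main obstacle is the threshold case $\mu=\musGr$: here the escape-to-infinity mode has exactly the energy $\tfrac{1}{2}\eign\musGr$ of a would-be ground state, and cannot be ruled out by strict inequality. My remedy would be to take $\mu_n\downarrow\musGr^+$ with corresponding ground states $u_n$ and use the remainder inequality together with $E(u_n)<\tfrac{1}{2}\eign\mu_n$ to keep $\|u_n\|_p$ bounded away from zero, so that radial compactness yields a nontrivial weak limit; the subadditivity argument plus the nonexistence result below then forces its mass to be exactly $\musGr$. Finally, nonexistence for $\mu\in(0,\musGr)$ follows by rescaling: if $u$ were a radial ground state of mass $\mu\in(0,\musGr)$, then $\|u'\|_2^2=\eign\mu+\tfrac{2}{p}\|u\|_p^p$ with $\|u\|_p>0$, and for $t>0$ with $(1+t)\mu\le\musGr$, the competitor $\sqrt{1+t}\,u$ of mass $(1+t)\mu$ has energy
$$\tfrac{1}{2}\eign(1+t)\mu+\tfrac{(1+t)\|u\|_p^p}{p}\bigl[1-(1+t)^{(p-2)/2}\bigr]\;<\;\tfrac{1}{2}\eign(1+t)\mu=\elevel_{\G,r}\bigl((1+t)\mu\bigr),$$
contradicting that $\elevel_{\G,r}$ is an infimum.
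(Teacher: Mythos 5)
Your overall architecture matches the paper's: scaling gives concavity, the upper bound and eventual negativity of $\elevel_{\G,r}$; a strict-subadditivity/compactness argument gives existence above the threshold; approximation from above handles the threshold mass; and the rescaling competitor gives nonexistence below it. Those parts are essentially correct: your concentration--compactness phrasing (strict subadditivity of $\mu\mapsto\frac12\eign\mu-\elevel_{\G,r}(\mu)$, which follows from convexity of that difference together with its vanishing at $0$) is equivalent to the paper's dichotomy lemma, and your compactness claim for radial minimizing sequences is exactly what the paper obtains from the exponential decay $|u(x)|^2\le C\,2^{-|x|}\Vert u'\Vert_2^2$ of radial $H^1$ functions (Lemma~\ref{lemmadecay}).

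The genuine gap is in the step that makes the radial statement stronger than the non-radial one, namely $\musGr>0$ for \emph{every} $p\in(2,6)$. You assert that the Poincar\'e remainder $\Phi(u)$, ``together with a Gagliardo--Nirenberg-type interpolation'', dominates $\frac2p\Vert u\Vert_p^p$ for small mass; but this domination is precisely what must be proved, and it fails for the remainder that is actually available in $L^\infty$ form. Indeed, with $\Phi(u)=C\Vert u\Vert_{L^\infty}^2$ as in Theorem~\ref{teopoinc}, the chain $\Vert u\Vert_\infty^2\le C\Vert u\Vert_p^p\le C\Vert u\Vert_\infty^{p-2}\mu$ only yields $1\le C\Vert u\Vert_\infty^{p-4}\mu$, which gives no lower bound on $\mu$ when $p<4$ and $\Vert u\Vert_\infty$ is small; this is exactly why the non-radial Theorem~\ref{THM 1} is restricted to $p\ge4$. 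The paper's mechanism (Lemma~\ref{lemmalowerbs}) instead combines the weighted Hardy remainder \eqref{rest_rad} with the elementary bound $\Vert u\Vert_p^p\le\sup_x\bigl(|u(x)|^{p-2}(1+|x|)^2\bigr)\int_\G|u|^2(1+|x|)^{-2}\dx$ and, crucially, with the exponential decay of Lemma~\ref{lemmadecay} to control $\sup_x|u(x)|^{p-2}(1+|x|)^2\le C\Vert u'\Vert_2^{p-2}$, which together with the a priori bound on $\Vert u'\Vert_2$ forces $\mu\ge\delta_1>0$. You invoke the exponential decay only later, for compactness, whereas it is indispensable here; without it your $\Phi$ is an unproved black box. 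The same lemma supplies the quantitative lower bound $\Vert v_n\Vert_\infty^2\,\mu_n^{(p-2)/(6-p)}\ge\delta_2$ in \eqref{lowerbs} that your threshold argument at $\mu=\musGr$ needs to prevent the approximating ground states from vanishing, so that step inherits the gap. A smaller omission: your upper bound $\elevel_{\G,r}(\mu)\le\frac12\eign\mu$ tacitly uses that the infimum of the Rayleigh quotient over \emph{radial} functions still equals $\eign$; this is true (identity \eqref{rayleighsym}) but requires an argument, which the paper gives via a quadratic averaging over points of equal depth.
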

In the light of this, the problem raised in Remark \ref{remopen} would be
solved, if one could prove that ground states are in fact radial functions
or, at least, that $\elevel_\G(\mu)=\elevel_{\G,r}(\mu)$ (note that
the inequality 
$\leq$ is trivially satisfied).

Our proof techniques for the radial and the non-radial case are quite similar,
but in the radial case one can exploit the exponential decay of radial $H^1$
functions (see Lemma~\ref{lemmadecay}) to obtain stronger estimates.
In either  case, a central role is played by the well-known fact that the infimum in
\eqref{rayleigh} is not attained (\cite{kovarik}), so that if $u$ has mass $\mu$
then
\begin{equation}
  \label{rempos}
  R:=\f12\int_\G|u'|^2\dx-\frac 1 2\lambda_1\mu>0,
\end{equation}
combined with the
fact that ground states of mass $\mu$ exist, as soon as the energy in \eqref{nlse}
can be made smaller than $\lambda_1\mu/2$ (see Proposition \ref{levelargument}). And, clearly,
this is possible only if the last integral in \eqref{nlse} is bigger than
the ``remainder'' $R$ in \eqref{rempos}.
It is therefore crucial to establish sharper forms of the Poincar\'e inequality \eqref{poinc1},
to  somehow quantify the fact that (when $u\not\equiv 0$) the inequality is \emph{strict}.
In the framework of metric trees, such inequalities ``with a remainder term'' have
been intensively studied in \cite{kovarik, kovarik1}, and in particular it is known that
\begin{equation}
\label{rest_rad}
\int_\G|u'|^2\dx-\eign\int_\G|u|^2\dx\geq C\int_\G\frac{|u|^2}{(1+|x|)^2}\dx\quad
\forall u\in H^1(\G).
\end{equation}
Although the power decay of the weight $(1+|x|)^{-2}$ is optimal (see \cite{kovarik} Thm.~2.4),
this inequality is not sufficient for our purposes, thus we shall prove the following
new inequality (which might be of some interest in itself), with
the remainder estimated in terms of the $L^\infty$ norm.
	\begin{theorem}[Remainder term in the Poincar\'e inequality]\label{teopoinc}
	Assume $\G=\T$, or $\G=\To$. Then, for every $u\in H^1(\G)$,
	\begin{equation}
	\label{poincarest}
	\int_\G|u'|^2\dx-\eign\int_\G|u|^2\dx\geq C\|u\|_{L^\infty(\G)}^2
	\end{equation}	
	for some constant $C>0$ independent of $u$.
	\end{theorem}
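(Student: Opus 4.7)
The proof goes by contradiction. Assume there exist $u_n\in H^1(\G)$ with $\|u_n\|_{L^\infty(\G)}^2=1$ (normalising by homogeneity) and
\[
R_n:=\int_\G|u_n'|^2\dx-\eign\int_\G|u_n|^2\dx\longrightarrow 0,
\]
and derive an impossibility.

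A preliminary reduction sends $\G=\To$ to $\G=\T$. Indeed $\T$ may be realised as three copies of $\To$ glued along a common vertex $\radice$: given $u\in H^1(\To)$ one defines an extension $\tilde u\in H^1(\T)$ by $\tilde u=u$ on the original copy and $\tilde u=u(\radice)\psi$ on each of the other two, where $\psi\in H^1(\To)$ satisfies $\psi(\radice)=1$, $\|\psi\|_\infty\le 1$ and $\int_\To|\psi'|^2-\eign\int_\To|\psi|^2\le\eps$. Such $\psi$ exists because the generalised eigenfunction at the bottom of the spectrum marginally fails $L^2$-summability, and truncating it at radius $R$ produces a deficit of order $1/R$. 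One checks that $R_\T(\tilde u)\le R_\To(u)+2\eps u(\radice)^2$ and $\|\tilde u\|_\infty=\|u\|_\infty$, so the inequality on $\T$ with constant $C_\T$ propagates to $\To$ with constant $C_\T-2\eps$.

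For $\G=\T$ the proof exploits the vertex-transitivity of the $3$-regular tree: by a change of variables through an automorphism, \eqref{rest_rad} holds with $|x|$ replaced by $d(x,y)$ for any vertex $y\in\T$, with the same constant $C$. Let $x_n$ satisfy $|u_n(x_n)|=1$ and pick an automorphism $\sigma_n$ of $\T$ bringing $x_n$ within distance $1/2$ of a fixed vertex $y_0$. Setting $v_n:=u_n\circ\sigma_n^{-1}$, one still has $R(v_n)=R_n\to 0$ and $|v_n(\sigma_n(x_n))|=1$, and the shifted version of \eqref{rest_rad} with reference $\sigma_n(x_n)$ forces $\int_{B_R(y_0)}|v_n|^2\dx\to 0$ for every $R>0$.

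The main point, which also bypasses the delicate regime $\|u_n\|_{L^2}\to\infty$ where global weak $H^1$-compactness is unavailable, is a companion bound on the local kinetic energy. Removing $B_R(y_0)$ from $\T$ leaves finitely many disjoint components, each isometric to a subtree whose Poincar\'e constant is again $\eign$, so that $\int_{\T\setminus B_R(y_0)}|w'|^2\ge\eign\int_{\T\setminus B_R(y_0)}|w|^2$ for all $w\in H^1$. Therefore
\[
\int_{B_R}|v_n'|^2\dx=\int_\T|v_n'|^2-\int_{\T\setminus B_R}|v_n'|^2\le R_n+\eign\int_{B_R}|v_n|^2\dx\longrightarrow 0,
\]
so $\|v_n\|_{H^1(B_R)}\to 0$. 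The Sobolev embedding on the finite subgraph $B_R$ upgrades this to $\|v_n\|_{L^\infty(B_R)}\to 0$, contradicting $|v_n(\sigma_n(x_n))|=1$ once $R$ is large enough. I expect the hardest part to be a clean implementation of the Poincar\'e-on-complement bound, which requires identifying each component of $\T\setminus B_R(y_0)$ with a subtree whose Poincar\'e constant can be directly compared to~$\eign$.
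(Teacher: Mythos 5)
Your reduction from $\To$ to $\T$ contains a genuine error. You need an auxiliary $\psi\in H^1(\To)$ with $\psi(\radice)=1$, $\|\psi\|_\infty\le1$ and \emph{absolute} deficit $\int_{\To}|\psi'|^2\dx-\eign\int_{\To}|\psi|^2\dx\le\eps$. No such $\psi$ exists once $\eps$ is small: since $\|\psi\|_\infty\ge|\psi(\radice)|=1$, the inequality \eqref{poincarest} on $\To$ --- which is true, and which your reduction is supposed to deliver --- forces that deficit to be at least the constant $C_{\To}>0$. The slip is in the claim that truncating the generalized ground state at radius $R$ ``produces a deficit of order $1/R$'': what is $O(1/R)$ is the \emph{relative} deficit, i.e.\ the Rayleigh quotient minus $\eign$, while the $L^2$ norm of the truncation grows like $R$ (this is exactly the marginal failure of square-summability you invoke), so the absolute deficit is the product of the two and stays bounded away from zero. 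The repair is immediate and is what the paper does: extend $u\in H^1(\To)$ to $\T$ by gluing three copies of $u$ \emph{itself} at the root; both sides of the Poincar\'e deficit triple, $\|u\|_\infty$ is unchanged, and one gets \eqref{poincarest} on $\To$ with constant $C_\T/3$.

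Your main argument for $\G=\T$ is essentially correct but follows a genuinely different route from the paper's. The paper argues directly by surgery: it takes a maximum point $x_0$, duplicates the two components of $\T\setminus\{x_0\}$ and joins them through a junction of total length $3$ on which the new function equals $\|u\|_\infty$; applying the plain Poincar\'e inequality \eqref{poinc1} to the resulting function and dividing by $2$ gives \eqref{poincarest} with the explicit constant $C=3\eign/2$, with no compactness and no external input. Your contradiction argument instead rests on two ingredients: the Hardy-type inequality \eqref{rest_rad} (recentred at a vertex near the maximum point via vertex-transitivity), which kills the local mass, and the Poincar\'e inequality on $\T\setminus B_R(y_0)$, which kills the local kinetic energy; the latter does hold, since for integer $R$ each component of the complement is two copies of $\To$ glued at a degree-two root, so \eqref{rayleigh} applies branch by branch with constant $\eign$. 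The scheme is sound and cleverly avoids any global weak compactness, but it is longer, produces no explicit constant, and uses \eqref{rest_rad} as a black box where the paper's proof is self-contained; conversely, your complement-Poincar\'e device is a reusable localization trick that the paper's surgery does not provide.
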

From \eqref{poincarest} we see that a function $u$ in \eqref{nlse},
of a given mass $\mu$, may have a kinetic integral close to optimality
in \eqref{rayleigh}, only at the price of a small $L^\infty$ norm,
which reflects into a small $L^p$ norm in \eqref{nlse}.
Therefore, for a function in \eqref{nlse},
achieving a small kinetic term  and a large $L^p$ norm
are two conflicting goals. For quantitative reasons, when $\mu$ is very small,
it may be convenient to sacrifice the latter goal in favour of the former, and
this is the reason why ground states of small mass (e.g. when $p\geq 4$) may fail to exist.

As is well known, any ground state $u$ will satisfy the ODE
\begin{equation}
  \label{ode}
  u''+u |u|^{p-2} =\lambda u
\end{equation}
(where $\lambda$ is a Lagrange multiplier) on every edge of the tree $\G$, coupled with
the Kirchhoff condition
\begin{equation}
  \label{kir}
\sum_{e} \frac{du_e}{dx_e}(v)=0
\end{equation}
at every vertex of $\G$, where the sum is extended over all the edges $e$ incident at $v$
(see e.g. \cite{AST-CV}). If $\G=\To$ and $u\in H^1_{\mu,r}(\To)$ is
a \emph{radial} ground state, this equation can be easily visualized on $\R^+$,
as follows. Indeed,
in this case $u(x)=v(|x|)$ for a suitable continuous function $v$ defined on
$[0,+\infty)$,  so that \eqref{ode} translates to
\[
 v''(t)+v(t) |v(t)|^{p-2} =\lambda v(t)\quad \forall t\in (j,j+1),\quad
 j\in \{0,1,2,\ldots\},
\]
while \eqref{kir} turns into
\[
v'(0)=0,\quad
v'(j^+)=\frac 1 2 v'(j^-),\quad j\in \{1,2,3,\ldots\}.
\]
This rather simple visualization of radial ground states of course fails in the general nonradial setting
and this is one of the reasons why results in this case are harder to obtain. Our results are no exception, since, as we have already pointed out, Theorem \ref{THM 2} is more precise than Theorem \ref{THM 1}.

Finally we point out that all of our results hold without any modifications for homogeneous trees with vertices of arbitrary degree and edges of arbitrary length. 
\medskip

The paper is structured as follows. Section \ref{sec:prelim} contains some preliminary estimates and the proofs of Theorem \ref{THM0} and Theorem \ref{teopoinc}. In Section  \ref{sec:existence} we analyze the compactness issues and we establish the main results that lead to the proof of Theorem  \ref{THM 1}. Finally, in Section \ref{sec:rad} we discuss the radially symmetric case and prove Theorem  \ref{THM 2}.
\medskip

\noindent{\bf Notation}. We denote by $\|u\|_{L^p(\G)}$ the usual $L^p$ norm of a function $u$ defined on a metric graph $\G$. Whenever possible, and typically in the proofs, we also use the simplified notation $\|u\|_p$.

\section{Inequalities and a priori estimates}
	\label{sec:prelim}
This section is initially devoted to the proof of Theorem~\ref{teopoinc} and
some a priori estimates that will be used in the sequel. And lastly,
using these tools, we will prove Theorem~\ref{THM0}.

\proof[{\bf Proof of Theorem \ref{teopoinc}}]
		We first consider the case where $\G=\T$.
Given $u\in H^1(\T)$, up to considering $-u$ in place of $u$, we may
assume that $M:=\Vert u\Vert_\infty$ is the absolute maximum of $u$.
Let $x_0\in \T$ be a point where $u(x_0)=M$ and assume, for the moment,
that $x_0$ is not a vertex of $\T$ (see Figure \ref{figxz}.(a)).
Since $\T$ is a tree,  the removal of $x_0$ disconnects $\T$ and,
since $x_0$ is not a vertex,
$\T\setminus\{x_0\}$
consists of two connected components whose closures (relative to $\T$)
will be denoted by $\T^1$ and $\T^2$.
Thus, each of
 $\T^1$ and $\T^2$ is a \emph{rooted} tree with root at $x_0$ and, if
$\ell\in (0,1)$ is the length of the pendant of $\T^1$,
then   the pendant of $\T^2$ has a length of $1-\ell$ (see Figure \ref{figxz}.(b)).

Denoting by $u^i$ the restriction of $u$ to $\T^i$,
by a surgery procedure we will now build a new function $v\in H^1(\T)$, as follows.
First, let $J$ be the compact graph made up of one edge of length 1, with two
pendants of length $\ell$ attached to one endpoint and two other pendants of
length $1-\ell$ attached to the other endpoint (see Figure \ref{figxz}.(c)).
Then \emph{duplicate}  $\T^1$, and attach
a copy of it to each of the two pendants of $J$ having length $1-\ell$;
 similarly,
duplicate  $\T^2$, and attach
a copy of it to each of the two pendants of $J$ having length $\ell$
(see Figure \ref{figsurg}).
Finally, for $i\in\{1,2\}$, define $v(x)=u^i(x)$ on each copy of $\T^i$
and
let $v\equiv M$ on $J$.

In this way, one obtains a tree isometric to $\T$ with a function $v\in H^1(\T)$,
such that $v\equiv M$ on a set of total length 3, while elsewhere $v$ essentially
consists of two copies of $u$, suitably joined together.
It is clear that
\[
\int_\T|v'|^2\dx=2\int_\T|u'|^2\dx\,,\qquad\int_\T|v|^2\dx=2\int_\T|u|^2\dx+3M^2
\]
and therefore, writing down the  inequality analogue to \eqref{poinc1} satisfied by $v$ and
dividing by 2, one obtains the reinforced
Poincar\'e inequality \eqref{poincarest} for $u$, with $C=3\lambda_1/2$.
Finally, if $x_0$ is a vertex of $\T$, removing $x_0$ would disconnect $\T$ into
\emph{three} connected components, but the previous argument (formally unchanged) still works,
  with one
connected components acting as $\T^1$ (with $\ell=1$) and the other two (joined together)
acting as $\T^2$.

The case where $\G=\To$ is easily reduced to the previous one.
      For,  given $u\in H^1(\To)$, we can join three copies of $\To$ at their roots,
      thus obtaining $\T$ with a threefold version of $u$ on it: writing down \eqref{poincarest}
      on $\T$, and then dividing by 3, yields the corresponding inequality for $u$ on $\To$,
      with $C/3$ in place of $C$.
\endproof

\newcommand\ppcc{0.7}% = frazione \ell, per fare \ell e 1-\ell
\newcommand\umppcc{0.3}% 1-\ell
\newcommand\disegnaTuno[2]{% passare angolo iniziale, e lunghezza edge
node [nodino] (rootT1) {} child [rotate=#1,grow cyclic,shape=circle,level distance=\ppcc*#2]
    { node [nodo]
       {} child [level distance=#2,sibling angle=100] foreach \B in {1,2}
        { { node [nodo] {} child [dashed,level distance=0.5*#2,sibling angle=80]  foreach \C in {1,2} }
        }
    }
%\end{scope}
}
\newcommand\disegnaTdue[2]{% angolo iniziale, lunghezza edge, e start (x,y)
node  [nodino] {} child [rotate=#1,grow cyclic,shape=circle,level distance=\umppcc)*#2]
    { node [nodo]
       {} child [level distance=#2,sibling angle=100] foreach \B in {1,2}
        { { node [nodo] {} child
        [dashed,level distance=0.5*#2,sibling angle=80]  foreach \C in {1,2} }
        }
    }
}

\newcommand\sidel{3.4em}% lunghezza lato
\newcommand\angolo{50}% rispetto alla verticale
\newcommand\dista{0.5*\sidel}% distanza tra giunto e 4 pezzi
\begin{figure}
\begin{center}
\begin{tikzpicture}[thick]
\path \disegnaTuno{90}{\sidel} \disegnaTdue{-90}{\sidel} node [right] {$\,x_0$};
\node at (-1.5*\sidel,2.25*\sidel) {(a)};
\end{tikzpicture}\qquad
\begin{tikzpicture}[thick]
\path +(0,0.25*\sidel)
\disegnaTuno{90}{\sidel} node [right] {$\,x_0$} +(0,-0.25*\sidel) \disegnaTdue{-90}{\sidel} node [right] {$\,x_0$};
\node at (1,0.875*\sidel) {$\T^1$};
\node at (1,-0.6*\sidel) {$\T^2$};
\begin{scope}[xshift=-0.15*\sidel]
\draw [<->,very thin] (0,0.25*\sidel) -- +(0,\sidel*\ppcc);
\node at ($  (0,0.25*\sidel) +(0,0.5*\sidel*\ppcc)$) [left] {\scriptsize $\ell\,\,$};
\draw [<->,very thin] (0,-0.25*\sidel) -- +(0,-\sidel*\umppcc);
\node at ($ (0,-0.25*\sidel) +(0,-0.5*\sidel*\umppcc) $) [left] {\scriptsize $1-\ell\,\,$};
\end{scope}
\node at (-1.5*\sidel,2*\sidel) {(b)};
%
% parte (c)
%
\begin{scope}[xshift=3.1*\sidel]
\draw (0,\sidel) node  [nodo] (n1) {} -- ++(0,-\sidel)  node [nodo] (n2) {}
-- +(-90-\angolo:\ppcc*\sidel) node [nodino] (c) {};
\draw (n1) -- +(90+\angolo:\sidel*\umppcc) node [nodino] (a) {};
\draw (n1) -- +(90-\angolo:\sidel*\umppcc) node [nodino] (b) {};
\draw (n2) -- +(-90+\angolo:\ppcc*\sidel) node [nodino] (d) {};
\draw [<->,very thin] ($(n2) +(\angolo:0.15*\sidel)$)  -- ($(d) +(\angolo:0.15*\sidel)$);
\node at ( $(n2)!0.5!(d) +(\angolo:0.1*\sidel)$) [above right] {\scriptsize $\!\ell$};
\draw [<->,very thin] ($(n1) +(-\angolo:0.15*\sidel)$)  -- ($(b) +(-\angolo:0.15*\sidel)$);
\node at ($(n1)!0.5!(b) +(-\angolo:0.1*\sidel)$) [below right] {\scriptsize $1-\ell$};

\node at (-1,0.5*\sidel) {$J$};
\node at (-1,2*\sidel) {(c)};
\end{scope}
\end{tikzpicture}
\end{center}
\caption{(a) The point $x_0$ on $\T$. (b) The two connected components $\T^1$ and $\T^2$.
(c) The junction $J$.}\label{figxz}
\end{figure}
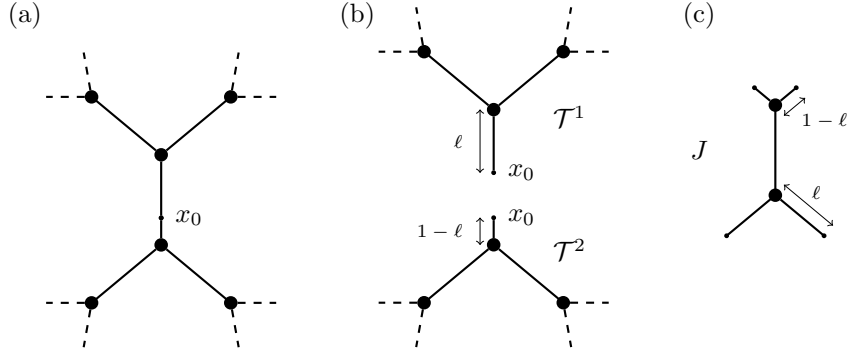

\begin{figure}
\begin{center}
\begin{tikzpicture}[thick]
\draw (0,0.5*\sidel) node  [nodo] (n1) {} -- (0,-0.5*\sidel)  node [nodo] (n2) {}
-- +(-90-\angolo:\ppcc*\sidel) node [nodino] (c) {};
\draw (n1) -- +(90+\angolo:\sidel*\umppcc) node [nodino] (a) {};
\draw (n1) -- +(90-\angolo:\sidel*\umppcc) node [nodino] (b) {};
\draw (n2) -- +(-90+\angolo:\ppcc*\sidel) node [nodino] (d) {};
\coordinate (a2) at ($(a) +(90+\angolo:\dista)$);
\coordinate (b2) at ($(b) +(90-\angolo:\dista)$);
\coordinate (c2) at ($(c) +(-90-\angolo:\dista)$);
\coordinate (d2) at ($(d) +(-90+\angolo:\dista)$);
\path (a2) \disegnaTuno{90+\angolo}{\sidel} ;
\path (b2) \disegnaTuno{90-\angolo}{\sidel} ;
\path  (c2)  \disegnaTdue{-90-\angolo}{\sidel};
\path (d2)  \disegnaTdue{-90+\angolo}{\sidel};
\node at (0,0) [rotate=0,right]  {$J$};
\path (n1) +(90+\angolo:2*\sidel) node {$\T^1$};
\path (n1) +(90-\angolo:2*\sidel) node {$\T^1$};
\path (n2) +(-90+\angolo:2*\sidel) node {$\T^2$};
\path (n2) +(-90-\angolo:2*\sidel) node {$\T^2$};
\path (n2) +(1.3*\sidel,0.1*\sidel) node (j1) {\scriptsize join};
\path (n1) +(1.1*\sidel,-0.1*\sidel) node (j2) {\scriptsize join};
\path (n2) +(-1.1*\sidel,0.1*\sidel) node (j1l) {\scriptsize join};
\path (n1) +(-1.3*\sidel,-0.1*\sidel) node (j2l) {\scriptsize join};
\draw [->,very thin,shorten >=0.2em] (j1) -- (d);
\draw [->,very thin,shorten >=0.2em] (j1) -- (d2);
\draw [->,very thin,shorten >=0.2em] (j2) -- (b);
\draw [->,very thin,shorten >=0.2em] (j2) -- (b2) ;
\draw [->,very thin,shorten >=0.2em] (j1l) -- (c2);
\draw [->,very thin,shorten >=0.2em] (j1l) -- (c);
\draw [->,very thin,shorten >=0.2em] (j2l) -- (a2);
\draw [->,very thin,shorten >=0.2em] (j2l) -- (a);
\end{tikzpicture}
\end{center}
\caption{The joint $J$, attached to two copies of $\T^1$ and two copies of $\T^2$.}\label{figsurg}
\end{figure}
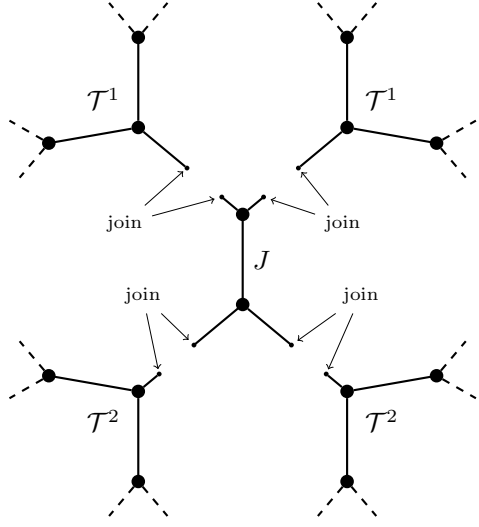

\begin{remark}
  As a byproduct, from \eqref{poincarest} one obtains the inequality
  \begin{equation}
    \label{stimalinf2}
    \Vert u\Vert_\infty\leq C \Vert u'\Vert_2\quad
    \forall u\in H^1(\G),
  \end{equation}
  for some universal constant $C>0$.
\end{remark}

A fundamental tool for the a priori estimates involving the NLS energy defined in \eqref{nlse}
is the Gagliardo-Nirenberg inequality
\begin{equation}
\label{gn}
\Vert u\Vert_p^p\leq C\,
\Vert u\Vert_2
^{\f{p}{2}+1}
\Vert u'\Vert_2
^{\f{p}{2}-1}\qquad\forall u\in H^1(\G),
\end{equation}
which is valid for $p\geq 2$ on any  metric graph $\G$ of infinite total length,
with a constant $C$ depending only on $p$ (see \cite{AST-JFA}).
Throught the paper, however, $\G$ will always denote either of the two trees
$\To$ and $\T$, and it will be tacitly understood that the exponent $p\in (2,6)$.
Recalling \eqref{defHumu}, \eqref{gn} can be rewritten as
\begin{equation}
\label{gnmu}
\Vert u\Vert_p^p\leq C\,
\mu^{\f{p+2}{4}}\,
\Vert u'\Vert_2
^{\f{p}{2}-1}\qquad\forall u\in H^1_\mu(\G).
\end{equation}

\begin{lemma}[a priori estimates] If $\G=\To$ or $\G=\T$,
then
\begin{align}
\label{coerc1}
\Vert u'\Vert_{L^2(\G)}^2 \leq 4 E(u,\G)+ C \mu^{\frac {p+2}{6-p}}
\quad\forall u\in H^1_\mu(\G),
\end{align}
for some $C=C(p)>0$. Moreover, if $\mu>0$
and
$u\in H^1_\mu(\G)$ is such that
\begin{equation}\label{usottoretta}
E(u,\G)\leq \frac 1 2\lambda_1\mu
\end{equation}
then
\begin{align}
\label{tesisottoretta}
1&\leq C   \Vert u\Vert_{L^\infty(\G)}^{p-4}\,\mu,\\
\label{tesisottoretta3}
\int_\G\frac{|u|^2}{(1+|x|)^2}\dx &\leq C \Vert u\Vert_{L^p(\G)}^p.
\end{align}
\end{lemma}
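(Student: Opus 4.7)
The three estimates are essentially reorganizations of the tools already assembled in the section (Gagliardo--Nirenberg, Theorem~\ref{teopoinc}, and the Kovařík inequality \eqref{rest_rad}), so my plan is to handle each one in turn with no major obstacle beyond careful exponent bookkeeping in the first step.

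For \eqref{coerc1}, the plan is to start from the defining identity
\[
\Vert u'\Vert_2^2 = 2 E(u,\G) + \tfrac{2}{p}\Vert u\Vert_p^p,
\]
and insert the mass-constrained Gagliardo--Nirenberg bound \eqref{gnmu}, which gives $\Vert u\Vert_p^p \le C\mu^{(p+2)/4}\Vert u'\Vert_2^{(p-2)/2}$. Since $p<6$, the exponent $(p-2)/2$ is strictly less than $2$, so I apply Young's inequality with conjugate exponents $r=4/(p-2)$ and $r'=4/(6-p)$ to absorb a small multiple of $\Vert u'\Vert_2^2$ into the left-hand side. A short computation shows the residual power of $\mu$ is $\frac{p+2}{4}\cdot\frac{4}{6-p}=\frac{p+2}{6-p}$, exactly as claimed.

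For \eqref{tesisottoretta}, the assumption \eqref{usottoretta} rewrites as
\[
\Vert u'\Vert_2^2 - \lambda_1\mu \le \tfrac{2}{p}\Vert u\Vert_p^p.
\]
Applying Theorem~\ref{teopoinc} (which gives $C\Vert u\Vert_\infty^2 \le \Vert u'\Vert_2^2-\lambda_1\mu$), I obtain $\Vert u\Vert_\infty^2 \le C\Vert u\Vert_p^p$. Then I estimate $\Vert u\Vert_p^p \le \Vert u\Vert_\infty^{p-2}\Vert u\Vert_2^2 = \Vert u\Vert_\infty^{p-2}\mu$, and combining the two inequalities yields $\Vert u\Vert_\infty^2 \le C\Vert u\Vert_\infty^{p-2}\mu$, i.e. \eqref{tesisottoretta}. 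Notice that $u\not\equiv 0$ (since $\mu>0$), hence $\Vert u\Vert_\infty>0$ and the division is legitimate.

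For \eqref{tesisottoretta3}, I proceed exactly as above to get $\Vert u'\Vert_2^2-\lambda_1\mu \le \tfrac{2}{p}\Vert u\Vert_p^p$, and then invoke the Kovařík inequality \eqref{rest_rad}, which bounds $\int_\G |u|^2/(1+|x|)^2\,dx$ by a constant times $\Vert u'\Vert_2^2-\lambda_1\mu$. Chaining the two gives the claimed bound. No new difficulty arises; the main step that one should perform with care is the Young-inequality computation of Part~1, and the only structural ingredient really needed in Parts~2 and~3 is that \eqref{usottoretta} forces the kinetic excess $\Vert u'\Vert_2^2-\lambda_1\mu$ to be controlled by $\Vert u\Vert_p^p$.
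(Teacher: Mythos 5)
Your proposal is correct and follows essentially the same route as the paper: Young's inequality applied to the Gagliardo--Nirenberg bound \eqref{gnmu} for \eqref{coerc1}, the reinforced Poincar\'e inequality \eqref{poincarest} combined with \eqref{usottoretta} and the interpolation $\Vert u\Vert_p^p\leq\Vert u\Vert_\infty^{p-2}\mu$ for \eqref{tesisottoretta}, and \eqref{rest_rad} in place of \eqref{poincarest} for \eqref{tesisottoretta3}. The exponent bookkeeping in the Young step is also exactly the paper's.
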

\begin{proof}
Since $p<6$, one can use the Young inequality to separate the product in
the right hand side of \eqref{gnmu},
and obtain
\[
\frac 1 p \Vert u\Vert_p^p\leq
\frac 1 4 \Vert u'\Vert_2^2+C
\mu^{\f{p+2}{6-p}}\,
\qquad\forall u\in H^1_\mu(\G),
\]
where $C=C(p)$.
Plugging into \eqref{nlse},  one obtains \eqref{coerc1}.

Furthermore, if $u\in H^1_\mu(\G)$, using \eqref{poincarest} we obtain
\begin{align*}
E(u,\G)&=\frac 1 2 \Vert u'\Vert_2^2 -\frac 1 p\Vert u\Vert_p^p\\
\geq &\frac {\lambda_1} 2 \Vert u\Vert_2^2 +C\Vert u\Vert_\infty^2-\frac 1 p\Vert u\Vert_p^p
=\frac 1 2\lambda_1\mu
+C\Vert u\Vert_\infty^2 -\frac 1 p\Vert u\Vert_p^p
\end{align*}
which, combined with \eqref{usottoretta}, gives
\begin{equation}
\label{tesisottoretta2}
\Vert u\Vert_\infty^2 \leq C \Vert u\Vert_p^p.
\end{equation}
Similarly, using \eqref{rest_rad} instead of \eqref{poincarest}, one obtains
\eqref{tesisottoretta3}. Finally, \eqref{tesisottoretta} is obtained
combining \eqref{tesisottoretta2}
with the inequality
\[
\Vert u\Vert_p^p
\leq \Vert u\Vert_\infty^{p-2} \Vert u\Vert_2^2
=\Vert u\Vert_\infty^{p-2} \,\mu,
\]
and rearranging terms (note that $\Vert u\Vert_\infty>0$ since $\mu>0$).
\end{proof}

Next we highlight some properties of the level function \eqref{elevel} that
will be widely used in the rest of the paper.

If $\mu\geq 0$ and $v\in H^1(\G)$ is not identically zero, renormalizing
$v$ we find
\begin{equation}
  \label{piega}
  \elevel_\G(\mu)\leq E\left(\sqrt \mu\,\frac{v}{\Vert v\Vert_2},\G\right)
  =\frac 1 2 \frac \mu{\Vert v\Vert_2^2}\Vert v'\Vert_2^2
  -
  \frac 1 p \left(\frac {\mu}{\Vert v\Vert_2^2}\right)^{\frac p 2}\Vert v\Vert_p^p.
\end{equation}
If, moreover, $\mu > \Vert v\Vert_2^2$, since $p>2$ we obtain
\begin{equation}
  \label{piega2}
  \elevel_\G(\mu)<\frac \mu{\Vert v\Vert_2^2}\left(
  \frac 1 2 \Vert v'\Vert_2^2
  -
  \frac 1 p \Vert v\Vert_p^p\right)=\frac \mu{\Vert v\Vert_2^2} E(v,\G).
\end{equation}
Finally, since by scaling $H^1_\mu(\G)=\{ \sqrt \mu\, v: v\in H^1_1(\G)\}$,
\eqref{elevel} can be rewritten as
\begin{equation}\label{infconc}
  \elevel_\G(\mu)=\inf_{v\in H^1_1(\G)}E\left(\sqrt\mu\,v,\G\right)=
  \inf_{v\in H^1_1(\G)}\left\{
  \frac 1 2 \mu\Vert v'\Vert_2^2
  -
\frac {\mu^{\frac p 2}}p\Vert v\Vert_p^p\right\},
\end{equation}
having used in the last passage the identity that stems from \eqref{piega}
when $\Vert v\Vert_2=1$.

We are now in a position to prove Theorem~\ref{THM0}.

\proof[{\bf Proof of Theorem \ref{THM0}}] First,  $\elevel_\G(0) =0$ by definition,
while for $\mu>0$ the infimum in \eqref{elevel} is always finite due to \eqref{coerc1}.
Next, since for every fixed $v\in H^1_1(\G)$ the quantity within braces in \eqref{infconc}
is a concave function of $\mu$, we see that
$\elevel_\G(\mu)$, being the lower envelope of concave functions, is concave
(this is typical of constrained
functionals having the form \eqref{nlse}, see e.g. \cite{AST-JFA}, Theorem 3.1).
Then, neglecting the last term in \eqref{piega}
one has
\[
\elevel_\G(\mu)
\leq \frac \mu 2
\,
\frac {\Vert v'\Vert_2^2}{\Vert v\Vert_2^2}
\qquad\forall v\in H^1(\G)\quad \text{($v\not\equiv 0$),}
\]
and taking the infimum over $v$ one obtains \eqref{sottoretta} from \eqref{rayleigh}.
Now, when $u\in H^1_\mu(\G)$,
$\mu\leq C \Vert u'\Vert_2^2$ by \eqref{poinc1}:
since moreover $\frac p 2 -1 <2$,
as a byproduct of \eqref{gnmu}
we have
\begin{equation*}
\Vert u\Vert_p^p\leq C\,
\mu^{\frac{p}2-1}\| u'\|_2^2.
\end{equation*}
Hence, recalling \eqref{nlse}, using the last inequality   we obtain
\[
E(u,\G)\geq
\frac {1} 2 \Vert u'\Vert_2^2 -
\frac C {\,p\,} \mu^{\frac p 2-1}\Vert u'\Vert_2^2
\geq
\frac {1} 2 \Vert u'\Vert_2^2\left(1-C\mu^{\frac p 2-1}\right).
\]
If $\mu$ is small enough, the last factor is positive, and using \eqref{poinc1}
again we find
\[
E(u,\G)\geq
\frac {1} 2 \lambda_1 \Vert u\Vert_2^2\left(1-C\mu^{\frac p 2-1}\right)
=\frac {1} 2 \lambda_1 \mu\left(1-C\mu^{\frac p 2-1}\right)\quad\forall u\in H^1_\mu(\G),
\]
so that
 $\elevel_\G(\mu) \ge \frac {1}2 \lambda_1 \mu(1 - C\mu^{\frac p 2 -1})$ if $\mu$ is small
 enough. Therefore,
 \[
 \liminf_{\mu \downarrow 0} \frac{\elevel_\G(\mu)}\mu\geq \frac 1 2\lambda_1,
 \]
 while the reverse inequality (for the limsup) is clear from \eqref{sottoretta}.
 Hence \eqref{derivL} is established
and, by concavity, this also shows that $\elevel_\G(\mu)$ is strictly positive and increasing in a right neighborhood of zero.

Finally, choosing any $v\in H^1(\G)$ ($v\not\equiv 0$), we see from \eqref{piega}
that
$\elevel_\G(\mu)<0$
if $\mu$ is large enough.
Therefore $\elevel_\G(\mu)$ achieves a positive maximum at some $\bar \mu>0$,
after which it becomes decreasing.
\endproof

As concerns the relations between $\elevel_{\To}$ and $\elevel_\T$, the following
lemma will be used in the proof of Theorem~\ref{teocfrlev}.

\begin{lemma}\label{lemmacfr}
There holds
\begin{equation}
  \label{dislevel}
  \elevel_{\To}(\mu) \le \elevel_\T(\mu)\quad\forall\mu\geq 0.
\end{equation}
On the other hand, if there exists a ground state of mass $\mu>0$ in $H^1_\mu(\T)$,
 then
\begin{equation}
\label{cfr}
  \elevel_{\To}(\mu) < \elevel_\T(\mu).
\end{equation}

\end{lemma}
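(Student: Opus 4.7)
The plan is to view $\T$ as three copies $\To^1,\To^2,\To^3$ of $\To$ glued at a common vertex $o$, so that any $v\in H^1(\T)$ decomposes via the restrictions $v_i := v|_{\To^i}\in H^1(\To^i)$ (with the compatibility $v_1(o)=v_2(o)=v_3(o)$) and satisfies
\[
E(v,\T)=\sum_{i=1}^3 E(v_i,\To^i),\qquad \sum_{i=1}^3 m_i=\|v\|_2^2,\quad m_i:=\|v_i\|_2^2.
\]
The key elementary tool, which crucially uses $p>2$, is that for each $i$ with $m_i>0$ the rescaled function $\tilde v_i:=\sqrt{\mu/m_i}\,v_i$ lies in $\Hmu(\To)$ and satisfies
\[
E(\tilde v_i,\To)=\frac{\mu}{2m_i}\|v_i'\|_2^2-\frac{1}{p}\Bigl(\frac{\mu}{m_i}\Bigr)^{p/2}\|v_i\|_p^p\le \frac{\mu}{m_i}E(v_i,\To^i),
\]
because $(\mu/m_i)^{p/2}\ge \mu/m_i$ whenever $\mu\ge m_i$ and $p\ge 2$, with strict inequality whenever $m_i<\mu$ and $v_i\not\equiv 0$.

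For \eqref{dislevel}, given any $v\in\Hmu(\T)$ I would apply the above to every index $i$ with $m_i>0$, obtaining $(m_i/\mu)\elevel_\To(\mu)\le E(v_i,\To^i)$. Summing over such indices and using $\sum_{m_i>0}m_i/\mu=1$ yields $\elevel_\To(\mu)\le E(v,\T)$, and infimizing over $v$ gives \eqref{dislevel}.

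For \eqref{cfr}, I would fix a ground state $u\in\Hmu(\T)$. Since $|u|$ has the same mass and energy as $u$, it is also a ground state, and I may assume $u\ge 0$. The decisive claim is that at least two of $m_1,m_2,m_3$ are strictly positive. Indeed, suppose $m_2=m_3=0$, so $u_2\equiv u_3\equiv 0$; then continuity at $o$ gives $u_1(o)=0$, and the Kirchhoff condition \eqref{kir} at $o$ forces $u_1'(o^+)=0$. On the first edge of $\To^1$, $u_1$ satisfies the Euler--Lagrange ODE \eqref{ode} (which is locally Lipschitz near $0$ since $p>2$) with zero Cauchy data, so $u_1\equiv 0$ there. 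Propagating inductively: at each subsequent vertex $v$, $u_1(v)=0$ and the derivative on the already-treated edge vanishes, so Kirchhoff makes the two outgoing derivatives sum to zero; but each of them must be nonnegative because $u_1\ge 0$ has a minimum at $v$, so both vanish, and ODE uniqueness propagates $u_1\equiv 0$ onto the next two edges. Iterating over distance from $o$ yields $u_1\equiv 0$ on all of $\To^1$, contradicting $\mu>0$.

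Hence at least two $m_i$ are positive, and each such $m_i$ satisfies $m_i<\mu$ (the remaining indices contributing positive mass). For those indices the rescaling inequality is \emph{strict}, and summing as before gives
\[
\elevel_\To(\mu)\le\sum_{m_i>0}\frac{m_i}{\mu}E(\tilde u_i,\To)<\sum_{i=1}^3 E(u_i,\To^i)=E(u,\T)=\elevel_\T(\mu),
\]
which is \eqref{cfr}. The main obstacle is exactly the elimination of the degenerate case where the ground state concentrates on a single branch: it is here that the sign reduction $u\ge 0$ (via $|u|$) and the Kirchhoff-plus-ODE propagation on an infinite tree become essential.
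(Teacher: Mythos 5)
Your proposal is correct, and for \eqref{cfr} its core mechanism coincides with the paper's: split $\T$ at a vertex into three copies of $\To$, apply the mass-rescaling inequality \eqref{piega2} to the restrictions, and sum. The differences are two. First, for \eqref{dislevel} the paper argues by density: compactly supported functions are dense in $\Hmu(\T)$ and any such function can be transplanted into $\To$ (a finite subtree of $\T$ embeds isometrically into $\To$ at sufficient depth); you instead reuse the branch-splitting and the non-strict form of the rescaling inequality, which is a genuinely different and arguably more uniform route, avoiding the transplantation step. Second, for \eqref{cfr} the paper simply invokes the strict positivity $|u|>0$ of a ground state to conclude that all three restricted masses $\mu_i$ lie in $(0,\mu)$, whereas you do not assume this and instead prove the weaker (but sufficient) statement that at least two branches carry positive mass, via the reduction to $|u|$, the Kirchhoff condition at $o$, the sign constraint on the outgoing derivatives at zeros of a nonnegative function, and Cauchy uniqueness for \eqref{ode} (legitimate since $t\mapsto t|t|^{p-2}$ is $C^1$ for $p>2$). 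Your version thus supplies a self-contained justification of a nondegeneracy point that the paper takes for granted, at the cost of invoking the Euler--Lagrange system, which the paper's proof of this lemma does not need once $|u|>0$ is granted. Both arguments are sound.
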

\begin{proof}
The functions $u\in\Hmu(\T)$ with compact support
in $\T$ are dense (in the $H^1$ norm) in $\Hmu(\T)$, and every such $u$
can be regarded as an element of $\Hmu(\To)$ by identifying its
support with a subset of $\To$ and setting $u\equiv 0$ elsewhere on $\To$.
This density argument proves \eqref{dislevel}.

Now let $u\in H^1_\mu(\T)$ be a ground state of mass $\mu$.
  Split $\T$ at a vertex, thus creating three rooted trees $\T_1, \T_2,\T_3$, and
  call $u_i$ the restriction of $u$ to $\T_i$.
  Since $|u|>0$,
letting $\mu_i =\|u_i\|_{L^2(\T_i)}^2$ we have $\mu_i\in (0,\mu)$.
Then  \eqref{piega2} can be applied with $\G=\T_i$ (which is isometric to $\To$) and $v=u_i$,
 yielding
\[
\frac{\mu_i}\mu \elevel_{\To}(\mu)=
\frac{\mu_i}\mu \elevel_{\T_i}(\mu)< E(u_i,\T_i),\quad  i\in\{1,2,3\}.
\]
Thus, summing over $i$, we obtain
\[
\elevel_{\To}(\mu)<\sum_{i=1}^3 E(u_i,\T_i)
=E(u,\T).
\]
Since  $u$ is a ground state,  $E(u,\T)=\elevel_\T(\mu)$
and \eqref{cfr} is established.
\end{proof}

\section{Existence and non-existence of ground states}
\label{sec:existence}

In this section we study the compactness properties of minimizing sequences, deriving conditions that ensure the existence of ground states.

\begin{lemma}[Dichotomy]
\label{dichotomy}
Let $\G=\T$ or $\G =\To$. Given $\mu>0$,  let $\{u_n\}\subset \Hmu(\G)$ be a minimizing sequence for $E$, i.e.
\[
\lim_{n\to\infty} E(u_n,\G)=\elevel_\G(\mu),
\]
and assume that $u_n\rightharpoonup u$ weakly in $H^1(\G)$.
Then either
\begin{itemize}
\item[(i)] $u\equiv 0$, or
\item[(ii)] $u\in \Hmu(\G)$ and $E(u,\G) = \elevel_\G(\mu)$, i.e. $u$ is a ground state
of mass $\mu$.
\end{itemize}
\end{lemma}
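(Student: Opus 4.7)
The coercivity bound \eqref{coerc1} applied to the minimizing sequence gives $\|u_n'\|_2^2 \le 4 E(u_n,\G) + C\mu^{(p+2)/(6-p)}$, so $\{u_n\}$ is bounded in $H^1(\G)$; since $u_n \rightharpoonup u$ in $H^1(\G)$, up to a subsequence $u_n \to u$ locally uniformly on $\G$ (by compact embedding of $H^1$ into $C$ on each bounded edge), hence pointwise a.e. Set $m := \|u\|_{L^2(\G)}^2 \in [0,\mu]$. The strategy is to show that $0 < m < \mu$ leads to a contradiction, so only the extremes $m = 0$ (case (i)) and $m = \mu$ (case (ii)) remain.

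Introduce $v_n := u_n - u \rightharpoonup 0$ weakly in $H^1(\G)$. Orthogonality of weak limits in $L^2$ yields the two identities
\[
\|u_n\|_2^2 = m + \|v_n\|_2^2 + o(1), \qquad \|u_n'\|_2^2 = \|u'\|_2^2 + \|v_n'\|_2^2 + o(1),
\]
while Brezis--Lieb (applicable thanks to the pointwise a.e.\ convergence and the $L^p$ bound) gives $\|u_n\|_p^p = \|u\|_p^p + \|v_n\|_p^p + o(1)$. Combining these,
\[
E(u_n,\G) = E(u,\G) + E(v_n,\G) + o(1), \qquad \|v_n\|_2^2 \to \mu - m.
\]
In case $m = \mu$, the norm convergence $\|u_n\|_2 = \|u\|_2$ together with weak convergence gives $\|v_n\|_2 \to 0$, and the Gagliardo--Nirenberg inequality \eqref{gn} with the $H^1$ bound on $v_n$ forces $\|v_n\|_p \to 0$; so $E(v_n,\G) \to 0$ and $u \in H^1_\mu(\G)$ satisfies $\elevel_\G(\mu) \le E(u,\G) = \lim E(u_n,\G) = \elevel_\G(\mu)$, i.e.\ case (ii).

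It remains to rule out $0 < m < \mu$. Since $\|v_n\|_2 \to \sqrt{\mu - m} > 0$, the renormalization $\tilde v_n := \sqrt{\mu-m}\,v_n/\|v_n\|_2$ lies in $H^1_{\mu-m}(\G)$ for $n$ large and, thanks to the $H^1$ bound on $v_n$, $E(\tilde v_n,\G) = E(v_n,\G) + o(1)$. Therefore $\liminf_n E(v_n,\G) \ge \elevel_\G(\mu-m)$, and the splitting identity gives
\[
\elevel_\G(\mu) \;\ge\; E(u,\G) + \elevel_\G(\mu - m).
\]
Since $u \not\equiv 0$ has mass $m < \mu$, one may apply the \emph{strict} inequality \eqref{piega2} with test function $v = u$, obtaining $E(u,\G) > \tfrac{m}{\mu}\elevel_\G(\mu)$. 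Substituting,
\[
\frac{\mu-m}{\mu}\elevel_\G(\mu) \;>\; \elevel_\G(\mu - m),
\]
which contradicts the concavity inequality $\elevel_\G(\mu - m) \ge \tfrac{\mu-m}{\mu}\elevel_\G(\mu)$ (a consequence of concavity and $\elevel_\G(0)=0$ from Theorem~\ref{THM0}).

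The main obstacle is clearly the intermediate regime $0 < m < \mu$: classical concentration-compactness would require strict subadditivity of the level function, but here $\elevel_\G$ may in fact be \emph{linear} on an initial interval (recall Theorem~\ref{THM 1}), so no such subadditivity is directly available. The trick is that the strict scaling inequality \eqref{piega2}, which is inherited from the $L^p$ superlinearity $p > 2$ applied to the nonzero weak limit $u$, is strong enough to close the argument when paired with the reverse (superadditivity) inequality coming from concavity. In this way the splitting is ruled out without any finer information on $\elevel_\G$.
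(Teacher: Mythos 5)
Your argument is correct and follows essentially the same route as the paper: Br\'ezis--Lieb splitting of $E(u_n,\G)$ along the weak limit, the strict scaling inequality \eqref{piega2} applied to the nonzero weak limit $u$, and a contradiction in the intermediate regime $0<m<\mu$. The one real difference is in how you bound $E(u_n-u,\G)$ from below: you renormalize $v_n=u_n-u$ to mass $\mu-m$, invoke the definition of $\elevel_\G(\mu-m)$, and then appeal to the concavity of $\elevel_\G$ (from Theorem~\ref{THM0}) to get $\elevel_\G(\mu-m)\ge\frac{\mu-m}{\mu}\elevel_\G(\mu)$; the paper instead applies \eqref{piega2} directly with $v=u_n-u$ (legitimate for large $n$ since $\|u_n-u\|_2^2<\mu$), which yields $\liminf_n E(u_n-u,\G)\ge\frac{\mu-m}{\mu}\elevel_\G(\mu)$ in one step and needs no concavity. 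Your detour is perfectly valid (concavity is established before this lemma), just slightly longer.

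One small imprecision in your $m=\mu$ case: $\|v_n\|_p\to 0$ does not by itself give $E(v_n,\G)\to 0$, since the kinetic term $\frac12\|v_n'\|_2^2$ is not known to vanish; what it gives is $\liminf_n E(v_n,\G)\ge 0$, which together with the splitting already yields $\elevel_\G(\mu)\ge E(u,\G)$ and hence equality. (The paper reaches the same conclusion via weak lower semicontinuity of $\|u_n'\|_2^2$ combined with strong $L^p$ convergence.) This is a one-line repair, not a gap in the argument.
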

The proof is similar to that of Lemma 4.1 in \cite{ADST}.
Here, for completeness, we present a simplified version of it.
\begin{proof}
Let $m = \|u\|_2^2$, so that $m \in [0,\mu]$. First, further assuming that
$m\in (0,\mu)$,
we shall find a contradiction.
Up to subsequences, we may assume that $u_n(x) \to u(x)$ a.e. on $\G$. Then, according to
the Brezis--Lieb Lemma (\cite{BL}), we can write
\begin{equation}
\label{s20}
E(u_n,\G) = E(u_n -u,\G) + E(u,\G) + o(1)\quad\text{as $n\to\infty$,}
\end{equation}
and, since $u_n\rightharpoonup u$ in $L^2(\G)$, as $n\to\infty$ we also have
\begin{equation} \label{massloss}
\Vert u_n-u\Vert_2^2=
\Vert u_n\Vert_2^2+
\Vert u\Vert_2^2
-2 \langle u_n,u\rangle_2
\to \mu- m.
\end{equation}
Therefore, since by assumption $m\in (0,\mu)$,
if  $n$ is large enough then $ u_n \not\equiv u$ and $\| u_n - u \|_2^2 < \mu$,
and \eqref{piega2} applied with $v=u_n-u$ gives
\[
E(u_n - u,\G) > \frac{\| u_n - u \|_2^2}\mu\, \elevel_\G(\mu).
\]
Hence, taking the liminf in \eqref{s20},
from the last inequality and \eqref{massloss}
we find
\[
\elevel_\G(\mu) \ge \frac{\mu -m}\mu\, \elevel_\G(\mu) +E(u,\G),
\]
i.e. $\elevel_\G(\mu) \ge \frac\mu m E(u,\G)$. But this is impossible since
it violates \eqref{piega2}, which holds true when $v=u$ since
$u\not\equiv 0$ and $\mu>m$.

This shows that either $m = 0$ (i.e. $u\equiv 0$) or $m = \mu$.
In the latter case, $u_n\to u$ strongly in $L^2(\G)$ by \eqref{massloss}, hence also in $L^p(\G)$
since the $u_n$ are uniformly bounded.
Therefore $u \in H_\mu^1(\G)$ and, by weak lower semicontinuity,
\[
E(u,\G) \le \liminf_n \frac12 \|u_n'\|_2^2 -\lim_n \frac1p\|u_n\|_p^p = \liminf_n E(u_n,\G) = \elevel_\G(\mu),
\]
which shows that $u$ is a minimizer.			
\end{proof}

The previous lemma leads to the following effective criterion for
the existence of ground states.

\begin{proposition}
\label{conv}
Let $\G=\T$ or $\G =\To$. Given $\mu>0$,  let $\{u_n\}\subset \Hmu(\G)$ be a minimizing sequence for $E$, i.e.
$\lim_{n} E(u_n,\G)=\elevel_\G(\mu)$.
If
\begin{equation}
\label{infty}
\liminf_n \|u_n\|_{L^\infty(\G)} >0,
\end{equation}
then  there exists a  ground state $u\in H^1_\mu(\G)$ of mass $\mu$.
\end{proposition}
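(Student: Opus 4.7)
The strategy is to couple the a priori bound \eqref{coerc1}---which yields that $\{u_n\}$ is bounded in $H^1(\G)$---with Lemma~\ref{dichotomy}: after extracting a weakly convergent subsequence $u_n \rightharpoonup u$ in $H^1(\G)$, the dichotomy reduces the problem to excluding $u \equiv 0$. It is precisely this exclusion that the assumption \eqref{infty} is meant to enable.

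Set $\delta := \tfrac12 \liminf_n \|u_n\|_{L^\infty(\G)} > 0$ and, up to extraction, pick $x_n \in \G$ with $|u_n(x_n)| \ge \delta$ for every $n$. The plan is to look for isometries $\phi_n$ of $\G$ sending the edge containing $x_n$ onto a fixed edge $e_0 \subset \G$, and to replace $u_n$ by $\tilde u_n := u_n \circ \phi_n^{-1}$. Because $E(\cdot, \G)$, the mass constraint and every $L^r$ and $H^1$ norm are invariant under isometries of $\G$, $\{\tilde u_n\}$ is still a minimizing sequence in $\Hmu(\G)$; after a further extraction, $\tilde u_n \rightharpoonup \tilde u$ in $H^1(\G)$, with $|\tilde u_n| \ge \delta$ at some point of $\overline{e_0}$. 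The compactness of the one-dimensional Sobolev embedding $H^1(e_0) \hookrightarrow C^0(\overline{e_0})$ on the closed segment $\overline{e_0}$ then promotes the weak convergence to uniform convergence on $\overline{e_0}$, so $\tilde u \not\equiv 0$, and Lemma~\ref{dichotomy} applied to $\{\tilde u_n\}$ produces the desired ground state.

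For $\G = \T$ this is straightforward because $\T$ is edge-transitive: a suitable $\phi_n$ exists for every $x_n$. For $\G = \To$ the construction is more delicate, since isometries must fix the root and can only move edges within their own level $d_n := \mathrm{dist}(\radice, x_n)$. If $\{d_n\}$ remains bounded, there is a fixed compact ball $K \subset \To$ into which every $x_n$ can be sent by an automorphism, and the argument above goes through with $K$ in place of $\overline{e_0}$. The genuine obstacle is the case $d_n \to \infty$, where no isometry can bring the peak back to a bounded region.

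In that case my plan would be to localize: pick a vertex $v_n$ at depth $\lfloor d_n \rfloor$ close to $x_n$, consider the descending subtree $S_n \subset \To$ with root $v_n$ (which is isometric to $\To$), and pull $u_n$ back through the isometry $\psi_n \colon \To \to S_n$ to obtain $w_n := u_n \circ \psi_n \in H^1(\To)$, with $|w_n(y_n)| \ge \delta$ at a point $y_n$ near $\radice$ and with $\|w_n\|_2^2 \le \mu$, $E(w_n, \To) \le E(u_n, \To)$. The weak limit $w \in H^1(\To)$ is then nonzero by the compact-embedding argument, and it captures a mass $m := \|w\|_2^2 \in (0, \mu]$. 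The delicate point---and the step I expect to be the main obstacle---is to show $m = \mu$: a Brezis--Lieb-style splitting of $u_n$ along $\To = S_n \cup (\To \setminus S_n)$ together with the strict sub-additivity \eqref{piega2} applied to the leftover piece of mass $\mu - m$ should force $m = \mu$, at which point the weak lower semicontinuity of $E(\cdot,\To)$ makes $w$ a ground state of mass $\mu$.
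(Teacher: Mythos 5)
Your Part~1 (the case $\G=\T$) is exactly the paper's argument: edge-transitivity of the isometry group, relocation of the peak to a fixed edge, compact embedding on that edge, then Lemma~\ref{dichotomy}. Your treatment of $\G=\To$ when the peaks stay at bounded depth is also fine (indeed, no automorphisms are needed there: local uniform convergence on a fixed ball already forces $u\not\equiv 0$). The problem is the escaping case $d_n\to\infty$, which is the whole content of Part~2, and there your sketch has a genuine gap precisely at the step you flag. The mechanism you propose --- split $u_n$ along $\To=S_n\cup(\To\setminus S_n)$ and apply the strict subadditivity \eqref{piega2} to the leftover piece --- does not close, for two reasons. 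First, \eqref{piega2} is an inequality for functions on $\G$ itself ($\G=\To$ or $\T$), while the leftover $u_n|_{\To\setminus S_n}$ lives on $\To\setminus S_n$, a graph isometric to neither; you cannot transplant it to $\To$ or $\T$ by extension by zero because $u_n$ need not vanish at the cut vertex $v_n$ (this is exactly the obstruction the paper's correction term $(1-x)\eps_n$ is designed to remove, and you have no analogue of it at $v_n$). Second, even granting a splitting, excluding $0<m<\mu$ this way would require a lower bound of the type $\inf_{H^1_\nu(\To\setminus S_n)}E\geq\frac{\nu}{\mu}\elevel_{\To}(\mu)$ for the pruned tree; what the embedding/density arguments of the paper actually give is the \emph{opposite} inequality (as in \eqref{dislevel}, a larger tree has a lower level), so nothing forces the escaping piece to be energetically inefficient. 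Incidentally, your intermediate claim $E(w_n,\To)\leq E(u_n,\To)$ is also unjustified: the contribution of $\To\setminus S_n$ to the energy can be negative.

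The deeper point your sketch misses is that when the peak escapes to infinity in $\To$, the correct ``problem at infinity'' is the one on $\T$, not on $\To$: a large ball around a deep point of $\To$ is isometric to a ball of $\T$, whereas your pull-back through $\psi_n$ discards the portion of $u_n$ above $v_n$. Accordingly, the decisive ingredient must be a comparison between $\elevel_{\To}$ and $\elevel_\T$. The paper's Part~2 does this cleanly: assuming $u_n\rightharpoonup 0$, it subtracts the vanishing affine correction $(1-x)u_n(0)$ on the root edge so that the modified, renormalized functions $w_n$ vanish at the root, extends them by zero to $\T$, observes that they form a minimizing sequence for the $\T$-problem with $\liminf_n\|w_n\|_\infty>0$ and that consequently $\elevel_{\To}(\mu)=\elevel_\T(\mu)$; Part~1 then yields a ground state on $\T$, and Lemma~\ref{lemmacfr} (inequality \eqref{cfr}) gives the strict inequality $\elevel_{\To}(\mu)<\elevel_\T(\mu)$, a contradiction. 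Some such ingredient --- the strict comparison of the two levels in the presence of a $\T$-ground state --- is indispensable, and it is entirely absent from your proposal; without it I do not see how to rule out the dichotomy scenario $0<m<\mu$ along the lines you indicate.
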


\begin{proof} We split the proof into two parts.

\medskip
		
\noindent{\em Part 1: $\G=\T$.} Since the isometry group of $\T$ acts transitively on its
edges, by applying a rigid motion to each $u_n$
we can further assume that each $u_n$ achieves its $L^\infty$ norm on
a given edge $e\subset\T$  independent of $n$.
Due to \eqref{coerc1}, $\{u_n\}$ is bounded in the $H^1$ norm, hence
for a subsequence (not relabeled) $u_n\rightharpoonup u$ for some $u\in H^1(\T)$ and,
in particular, $u_n\to u$ uniformly on $e$.
Thus, if $u\equiv 0$, then
 $u_n \to 0$ also in $L^\infty(\G)$
because all the $L^\infty$ norms are achieved on $e$.
But this is incompatible with \eqref{infty},
hence $u\not\equiv 0$ and,  by Lemma \ref{dichotomy}, $u$ is a ground state of mass $\mu$.
\medskip
			
\noindent{\em Part 2: $\G=\To$.} As before,
assuming  that $u_n\rightharpoonup 0$ in $H^1(\G)$, we shall find a contradiction, but
the argument  is more involved  since $\To$
has no translation invariance.
Identifying the edge containing the root with the interval $[0,1]$
($x=0$ corresponding to the root), we have $u_n \to 0$ uniformly on $[0,1]$,
by $L^\infty_{\rm loc}$ convergence.
Set $\eps_n = u_n(0)$ (so that $\eps_n\to 0$) and define $v_n :\To \to \R$ as
\[
v_n(x) = \begin{cases} u_n(x) - (1-x)\eps_n & \text{ if } x\in [0,1] \\
u_n(x) & \text{ elsewhere on } \To. \end{cases}
\]
Clearly, $v_n(0) = 0$ for each $n$, $v_n -u_n \to 0$ strongly in $H^1(\To)$ and $\|v_n\|_\infty = \|u_n\|_\infty$
for every $n$ large.
Finally, define $w_n:\To \to \R$ by
\[
w_n(x) = \frac{\sqrt\mu}{\|v_n\|_2}\,v_n(x)
\]
so that $w_n \in \Hmu(\To)$ and, plainly,
\begin{equation}
\label{cof}
E(w_n,\To) = E(u_n,\To) +o(1)
\end{equation}
as $n \to \infty$, while $\liminf_n \|w_n\|_\infty >0$ still holds. Identifying $\To$ with a subtree of $\T$, and recalling that $w_n(0)=0$, we can view the functions $w_n$ as elements of $\Hmu(\T)$,
after extending them to zero on $\T\setminus \To$. By \eqref{cof} and \eqref{dislevel},
\[
\elevel_\T(\mu)\leq\lim_n E(w_n,\T)=\lim_n E(u_n,\To)=\elevel_{\To}(\mu)
\leq \elevel_{\T}(\mu)
\]
which reveals
that $w_n$ is a minimizing sequence for $E(\,\cdot\, , \T)$ in $H^1_\mu(\T)$.
Since $w_n$ does not tend to zero in $L^\infty(\T)$, by
Part~1 there exists a ground state
of mass $\mu$ in $\Hmu(\T)$,
but this is a contradiction, since  the last chain of inequalities is
incompatible with \eqref{cfr}.
\end{proof}
		
Now, recalling \eqref{sottoretta}, we can show that a ground state
of mass $\mu$ exists, as soon as the function $\elevel_\G(\mu)$ detaches
from the linear function $\frac 1 2\lambda_1\mu$.

\begin{proposition}
\label{levelargument}
Let $\G=\T$ or $\G =\To$.
If $\elevel_\G(\mu)<\frac12\eign\mu$ or, equivalently,
if $\mu>\musG$,
 then there exists
a ground state $u\in \Hmu(\G)$ of mass $\mu$.
\end{proposition}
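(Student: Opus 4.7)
The plan is to reduce the existence of a ground state to the compactness criterion provided by Proposition~\ref{conv}, i.e.\ to show that any minimizing sequence $\{u_n\}\subset \Hmu(\G)$ satisfies $\liminf_n \|u_n\|_{L^\infty(\G)}>0$ whenever $\elevel_\G(\mu)<\tfrac12\eign\mu$.

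First I would verify the equivalence between the two conditions ``$\elevel_\G(\mu)<\tfrac12\eign\mu$'' and ``$\mu>\musG$''. One direction is immediate from the definition \eqref{critmass} together with the upper bound \eqref{sottoretta}: if $\mu>\musG$ then $\elevel_\G(\mu)\ne\tfrac12\eign\mu$, hence $\elevel_\G(\mu)<\tfrac12\eign\mu$. For the converse, I would use the concavity of $\elevel_\G$ proved in Theorem~\ref{THM0}: since $\elevel_\G(0)=0$ and $\elevel_\G(\musG)=\tfrac12\eign\musG$ (by definition of $\musG$ as a maximum, together with continuity, which follows from concavity), concavity forces $\elevel_\G(\mu)=\tfrac12\eign\mu$ on the whole interval $[0,\musG]$, so a strict inequality at $\mu$ implies $\mu>\musG$.

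Next, assuming $\elevel_\G(\mu)<\tfrac12\eign\mu$, pick any minimizing sequence $\{u_n\}\subset\Hmu(\G)$. Combining the Poincar\'e inequality \eqref{poinc1} with the elementary bound
\[
\|u_n\|_p^p\le \|u_n\|_\infty^{p-2}\|u_n\|_2^2=\|u_n\|_\infty^{p-2}\mu,
\]
I obtain
\[
E(u_n,\G)=\f12\|u_n'\|_2^2-\f1p\|u_n\|_p^p\ge \f12\eign\mu-\f1p\|u_n\|_\infty^{p-2}\mu.
\]
If $\|u_n\|_\infty\to 0$ (along a subsequence), passing to the liminf would give $\elevel_\G(\mu)\ge\tfrac12\eign\mu$, contradicting the hypothesis. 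Hence $\liminf_n\|u_n\|_\infty>0$, and Proposition~\ref{conv} produces a ground state of mass $\mu$.

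The main step here is really the $L^\infty$ lower bound on minimizing sequences; the rest is bookkeeping. No serious obstacle is expected, since the crucial nontrivial ingredients (the dichotomy Lemma~\ref{dichotomy}, the compactness criterion of Proposition~\ref{conv}, and concavity of $\elevel_\G$) are already in place.
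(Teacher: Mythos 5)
Your proposal is correct and follows essentially the same route as the paper: the equivalence of the two hypotheses via concavity and \eqref{sottoretta}, and then the contradiction argument showing that a minimizing sequence with $\|u_n\|_\infty\to 0$ would force $\elevel_\G(\mu)\ge\tfrac12\eign\mu$ (the paper phrases this by first deducing $\|u_n\|_p\to 0$ from the same interpolation bound $\|u_n\|_p^p\le\|u_n\|_\infty^{p-2}\mu$, then applying Poincar\'e), so that Proposition~\ref{conv} yields the ground state.
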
		

\begin{proof} Since $\elevel_\G(\mu)$ is concave and $\elevel_\G(0)=0$,
the equivalence of the two assumptions is immediate from \eqref{sottoretta} and \eqref{critmass}.

Now let $\{u_n\} \subset \Hmu(\G)$ be a minimizing sequence for $E$ on $\Hmu(\G)$.
If \eqref{infty} were false then
a subsequence (still denoted by $u_n$) would  satisfy
$\Vert u_n\Vert_\infty\to 0$, hence
$u_n\to 0$ also in $L^p$
since $\Vert u_n\Vert_2^2=\mu$ and $p>2$.
In this case, by the Poincar\'e inequality \eqref{poinc1} we would have
\[
\elevel_\G(\mu)=\lim_n E(u_n,\G)=\lim_n \frac12\|u_n'\|_2^2 -\lim_n \frac1p \|u_n\|_p^p\geq\f12\eign\mu,
\]
which contradicts our assumption. Hence \eqref{infty}
is necessarily satisfied, and Proposition \ref{conv} applies.
\end{proof}

As a useful criterion for the existence of ground states we state the following simple consequence of the preceding result.

\begin{corollary}
Let $\G=\T$ or $\G =\To$. The level $\elevel_\G(\mu)$ is achieved
by a ground state
if, and only if, there exists a function $u\in \Hmu(\G)$ such that $E(u,\G) \le \frac12 \lambda_1 \mu$.
\end{corollary}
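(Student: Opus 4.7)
The plan is to derive this as a straightforward consequence of Proposition~\ref{levelargument} together with the universal upper bound~\eqref{sottoretta}, by splitting the argument into the two directions of the iff.

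For the ``only if'' direction, if a ground state $u$ exists for the mass $\mu$, then $E(u,\G)=\elevel_\G(\mu)$ and \eqref{sottoretta} immediately yields $E(u,\G)\le\frac12\lambda_1\mu$, so nothing more is needed.

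For the ``if'' direction, suppose some competitor $u\in\Hmu(\G)$ satisfies $E(u,\G)\le\frac12\lambda_1\mu$. By definition of the infimum, $\elevel_\G(\mu)\le E(u,\G)\le\frac12\lambda_1\mu$, and \eqref{sottoretta} gives the reverse inequality $\elevel_\G(\mu)\le\frac12\lambda_1\mu$ as well. I distinguish two cases. If the inequality is strict, i.e.\ $\elevel_\G(\mu)<\frac12\lambda_1\mu$, then Proposition~\ref{levelargument} applies directly and produces a ground state. If instead $\elevel_\G(\mu)=\frac12\lambda_1\mu$, then the chain $\elevel_\G(\mu)\le E(u,\G)\le\frac12\lambda_1\mu=\elevel_\G(\mu)$ forces equality throughout, so $E(u,\G)=\elevel_\G(\mu)$ and $u$ itself is a ground state of mass $\mu$.

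There is no real obstacle here: the corollary is just a bookkeeping repackaging of Proposition~\ref{levelargument}, handling the borderline equality case $E(u,\G)=\frac12\lambda_1\mu$ by observing that the candidate $u$ already realizes the level. The only mildly delicate point is to notice that the hypothesis allows equality, so one cannot invoke Proposition~\ref{levelargument} blindly; the case split above takes care of this.
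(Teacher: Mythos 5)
Your proof is correct and follows essentially the same route as the paper's: the ``only if'' direction is immediate from \eqref{sottoretta}, and the ``if'' direction reduces to Proposition~\ref{levelargument} after a case split that handles the borderline situation (the paper splits on whether $u$ itself achieves the level, you split on whether $\elevel_\G(\mu)<\frac12\lambda_1\mu$, but these are the same dichotomy). Nothing is missing.
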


\begin{proof} Let $u\in \Hmu(\G)$ satisfy $E(u,\G) \le \frac12 \lambda_1 \mu$.
If $u$ does not achieve $\elevel_\G(\mu)$, then $\elevel_\G(\mu)<\frac12 \lambda_1\mu$,
and a ground state exists
by Proposition \ref{levelargument}. Conversely, if $u \in \Hmu(\G)$ is
a ground state, then $E(u,\G) = \elevel_\G(\mu) \le \frac12 \lambda_1 \mu$ by \eqref{sottoretta}.
\end{proof}

\begin{proposition}
\label{prop_non existence}
Let $\G=\T$ or $\G= \To$. If $\musG>0$ and $\nu\in (0,\musG)$,
then there is no ground state of mass $\nu$.
\end{proposition}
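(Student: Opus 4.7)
The plan is to argue by contradiction, exploiting the strict scaling inequality \eqref{piega2} together with the defining maximality of $\musG$.

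First I would record a structural fact about the level function on the interval $[0,\musG]$. By Theorem~\ref{THM0} the function $\elevel_\G$ is concave with $\elevel_\G(0)=0$, and \eqref{sottoretta} gives $\elevel_\G(\mu)\leq\frac{1}{2}\lambda_1\mu$ everywhere. Since $\elevel_\G(\musG)=\frac{1}{2}\lambda_1\musG$ by the definition \eqref{critmass}, concavity forces $\elevel_\G(\mu)=\frac{1}{2}\lambda_1\mu$ for every $\mu\in[0,\musG]$: the graph of a concave function touching a line at two points must coincide with the line between them. In particular, for the given $\nu\in(0,\musG)$ one has $\elevel_\G(\nu)=\frac{1}{2}\lambda_1\nu$.

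Now I would suppose, for contradiction, that there is a ground state $u\in\Hmu[\nu](\G)$ of mass $\nu$, so that $E(u,\G)=\elevel_\G(\nu)=\frac{1}{2}\lambda_1\nu$. Since $\nu>0$, $u\not\equiv 0$, and furthermore $\musG>\nu=\|u\|_2^2$, so the hypotheses of \eqref{piega2} are satisfied with the choice $v=u$ and $\mu=\musG$. Applying that strict inequality yields
\[
\elevel_\G(\musG)<\frac{\musG}{\|u\|_2^2}\,E(u,\G)=\frac{\musG}{\nu}\cdot\frac{1}{2}\lambda_1\nu=\frac{1}{2}\lambda_1\musG,
\]
which contradicts $\elevel_\G(\musG)=\frac{1}{2}\lambda_1\musG$. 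Hence no such ground state can exist.

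There is no serious obstacle here, since all the work has already been done upstream: concavity of $\elevel_\G$, the sub-line bound \eqref{sottoretta}, and the strict scaling comparison \eqref{piega2}. The only point worth noting is the strictness in \eqref{piega2}, which requires $p>2$ and $v\not\equiv 0$ and $\mu>\|v\|_2^2$; all three hold because $p\in(2,6)$ by standing assumption, the ground state has positive mass, and $\nu$ is strictly less than $\musG$. This is precisely what turns the argument from a tautology into a contradiction.
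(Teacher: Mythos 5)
Your proof is correct and follows essentially the same route as the paper: both arguments apply the strict scaling inequality \eqref{piega2} with $\mu=\musG$ to the candidate function of mass $\nu$ and contrast the result with $\elevel_\G(\musG)=\frac12\lambda_1\musG$. The only difference is cosmetic: the paper does not need your preliminary concavity step establishing $\elevel_\G(\nu)=\frac12\lambda_1\nu$, since it suffices to show directly that \emph{every} $v\in H^1_\nu(\G)$ satisfies $E(v,\G)>\frac12\lambda_1\nu\geq\elevel_\G(\nu)$ by \eqref{sottoretta}.
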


\begin{proof}
  Let $v\in H^1_\nu(\G)$ be an arbitrary function of mass  $\nu\in (0,\musG)$.
  Then \eqref{piega2}, applied with $\mu=\musG$, yields
\[
\frac 1 2\lambda_1\musG =  \elevel_\G(\musG)< \frac {\musG}\nu E(v,\G),
  \]
so that $E(v,\G)>\frac 1 2\lambda_1\nu$.
Since by \eqref{sottoretta} $\frac 1 2\lambda_1\nu\geq \elevel_\G(\nu)$,
$v$ cannot be a ground state.
\end{proof}

\begin{theorem}\label{teocfrlev}
There hold $\musTo\leq \musT$ and
\begin{equation}
  \label{cfrlev}
  \elevel_{\To}(\mu)\,\,\,
  \begin{cases}
  \,\, = \,\,\elevel_\T(\mu)\,\, =\,\,\frac 1 2\lambda_1\mu &\text{if $\mu\in [0,\musTo]$},\\{}\\
  \,\,< \,\,\elevel_\T(\mu) &\text{if $\mu>\musTo$.}
  \end{cases}
\end{equation}
Moreover, if $p\in (4,6)$, then $0<\musTo<\musT$.
\end{theorem}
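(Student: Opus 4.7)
The plan is to deduce everything from the previously established tools: the comparison \eqref{dislevel} and its strict form \eqref{cfr} in Lemma~\ref{lemmacfr}, the concavity/upper-bound structure of $\elevel_\G$ from Theorem~\ref{THM0}, the existence criterion in Proposition~\ref{levelargument}, and the information on $\musG$ contained in Theorem~\ref{THM 1}.

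First I would prove the bound $\musTo\le\musT$ and the top line of \eqref{cfrlev} simultaneously by a sandwich argument. Combining \eqref{sottoretta} with \eqref{dislevel} gives
\[
\tfrac12\lambda_1\musTo=\elevel_{\To}(\musTo)\le\elevel_\T(\musTo)\le\tfrac12\lambda_1\musTo,
\]
so $\elevel_\T(\musTo)=\tfrac12\lambda_1\musTo$, whence $\musTo\le\musT$. For any $\mu\in[0,\musTo]$, concavity of $\elevel_{\To}$ together with $\elevel_{\To}(0)=0$ and $\elevel_{\To}(\musTo)=\tfrac12\lambda_1\musTo$ forces $\elevel_{\To}(\mu)\ge\tfrac12\lambda_1\mu$ on this interval, and combined with \eqref{sottoretta} gives $\elevel_{\To}(\mu)=\tfrac12\lambda_1\mu$; the same sandwich with $\elevel_\T$ in the middle then gives $\elevel_\T(\mu)=\tfrac12\lambda_1\mu$ as well.

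For $\mu>\musTo$ I would split into two cases. If $\mu\in(\musTo,\musT]$, then $\elevel_\T(\mu)=\tfrac12\lambda_1\mu$ by the same concavity argument applied on $\T$, while the definition of $\musTo$ together with concavity of $\elevel_{\To}$ gives $\elevel_{\To}(\mu)<\tfrac12\lambda_1\mu$, yielding the strict inequality. If instead $\mu>\musT$, then $\elevel_\T(\mu)<\tfrac12\lambda_1\mu$ so by Proposition~\ref{levelargument} a ground state of mass $\mu$ exists in $H^1_\mu(\T)$, and the strict inequality \eqref{cfr} of Lemma~\ref{lemmacfr} directly yields $\elevel_{\To}(\mu)<\elevel_\T(\mu)$.

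Finally, for $p\in(4,6)$: the bound $\musTo>0$ is exactly Theorem~\ref{THM 1}(i) applied to $\G=\To$, so only the strict inequality $\musTo<\musT$ requires argument, and this is the one genuinely delicate point. I would argue by contradiction: if $\musT=\musTo$, then by Theorem~\ref{THM 1}(ii) applied to $\G=\T$ (where the restriction $p>4$ is used in an essential way) a ground state of mass $\musT$ exists on $\T$, so Lemma~\ref{lemmacfr} gives $\elevel_{\To}(\musT)<\elevel_\T(\musT)=\tfrac12\lambda_1\musT$; but the assumption $\musT=\musTo$ forces $\elevel_{\To}(\musT)=\tfrac12\lambda_1\musT$, a contradiction. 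The main obstacle is really the availability of a minimizer at the threshold $\musT$ itself, which is why the argument only runs for $p\in(4,6)$ and leaves the borderline $p=4$ and the subcritical range $p\in(2,4)$ open.
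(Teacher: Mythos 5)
Your proof is correct and follows essentially the same route as the paper's: the sandwich via \eqref{dislevel} and \eqref{sottoretta} for $\musTo\le\musT$ and the linear regime, the case split at $\musT$ combined with Proposition~\ref{levelargument} and Lemma~\ref{lemmacfr} for the strict inequality, and Proposition~\ref{critm>0} plus the existence of a ground state of mass $\musT$ on $\T$ (Lemma~\ref{gsmcrit}) for $0<\musTo<\musT$. The only cosmetic difference is that you phrase the last step as a contradiction, whereas the paper concludes directly that $\elevel_{\To}(\musT)<\tfrac12\lambda_1\musT$ forces $\musT>\musTo$.
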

\begin{proof}
The inequality $\musTo\leq \musT$
  follows from \eqref{critmass} and \eqref{dislevel},  hence in particular
$  \elevel_{\To}(\mu)
 =\elevel_\T(\mu)=\frac 1 2\lambda_1\mu$ for every $\mu\in [0,\musTo]$.
Now consider any $\mu>\musTo$, so that $\elevel_{\To}(\mu)<\frac 1 2\lambda_1\mu$ by \eqref{critmass}
and \eqref{sottoretta}. If  $\elevel_{\T}(\mu)=\frac 1 2\lambda_1\mu$, then
the strict inequality in \eqref{cfrlev} is obvious; if, on the other hand,
$\elevel_{\T}(\mu)<\frac 1 2\lambda_1\mu$,
then
a ground state
  $u\in H^1_\mu(\T)$ exists by Proposition~\ref{levelargument}, and
the strict inequality in  \eqref{cfrlev} follows from Lemma~\ref{lemmacfr}.

Finally, when $p\in (4,6)$,
  $\musTo>0$ by Proposition~\ref{critm>0}, and there exists a ground state of mass
  $\musT$ in $H^1_{\musT}(\T)$ by Lemma~\ref{gsmcrit}. Therefore,
   Lemma~\ref{lemmacfr} guarantees the strict inequality $\elevel_{\To}(\musT)<
   \elevel_\T(\musT)=\frac 1 2\lambda_1\musT$ and, consequently, $\musT>\musTo$.
\end{proof}

\begin{proposition}
\label{critm>0}
Let $\G=\T$ or $\G=\To$. If $p\in[4,6)$, then $\musG>0$.
\end{proposition}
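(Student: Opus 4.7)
The plan is to show that there exists $\mu_0>0$ such that for every $\mu\in(0,\mu_0)$ no function $u\in H^1_\mu(\G)$ satisfies \eqref{usottoretta}, i.e.\ $E(u,\G)\leq\frac 12\lambda_1\mu$. Once this is established, the preceding corollary forces $\elevel_\G(\mu)\geq \frac 12\lambda_1\mu$ on $(0,\mu_0)$, which combined with \eqref{sottoretta} yields $\elevel_\G(\mu)=\frac 12\lambda_1\mu$ there, and hence $\musG\geq \mu_0>0$ by the definition \eqref{critmass}.

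The strategy is thus to argue by contradiction: assume some $u\in H^1_\mu(\G)$ satisfies \eqref{usottoretta} and derive a positive lower bound on $\mu$. The starting point is the conclusion \eqref{tesisottoretta} of the a priori lemma, namely $1\leq C\|u\|_\infty^{p-4}\mu$, which is effective precisely when $p\geq 4$. When $p=4$ the factor $\|u\|_\infty^{p-4}$ disappears and the inequality immediately reads $\mu\geq 1/C$, so there is nothing more to do.

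When $p\in(4,6)$ the task is to control $\|u\|_\infty$ from above by a power of $\mu$. I would combine \eqref{coerc1} with the assumption \eqref{usottoretta} to get
\[
\|u'\|_2^2 \leq 4 E(u,\G)+C\mu^{\frac{p+2}{6-p}} \leq 2\lambda_1\mu + C\mu^{\frac{p+2}{6-p}},
\]
and then feed this into the embedding \eqref{stimalinf2}, namely $\|u\|_\infty\leq C\|u'\|_2$. Since $p>2$ gives $\frac{p+2}{6-p}\geq 1$, the two terms on the right are both $\leq C\mu$ for $\mu\leq 1$, so
\[
\|u\|_\infty^2\leq C\mu \qquad\text{whenever }\mu\leq 1.
\]
Inserting this bound into $1\leq C\|u\|_\infty^{p-4}\mu$ yields
\[
1\leq C\,\mu^{\frac{p-4}{2}}\cdot\mu = C\,\mu^{\frac{p-2}{2}},
\]
which, since $(p-2)/2>0$, gives the desired positive lower bound $\mu\geq (1/C)^{2/(p-2)}$. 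Taking $\mu_0$ to be the minimum of this value with $1/C$ (for $p=4$) or with $1$ (for $p>4$) completes the argument.

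The whole proof is essentially an exercise in chasing the inequalities already packaged in the lemma of Section~\ref{sec:prelim}, so I do not foresee a serious obstacle: the only subtle point is to notice that \eqref{tesisottoretta} is trivial for $p<4$ (the exponent $p-4$ is negative and $\|u\|_\infty$ can be arbitrarily small without contradiction), which is exactly the structural reason why the argument breaks down in the range $p\in(2,4)$ left open in Remark~\ref{remopen}.
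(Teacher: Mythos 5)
Your proof is correct and follows essentially the same route as the paper: the case $p=4$ is read off directly from \eqref{tesisottoretta}, and for $p\in(4,6)$ the bound $\|u\|_\infty^2\leq C\mu$ for small $\mu$ is obtained exactly as in the paper by combining \eqref{stimalinf2}, \eqref{coerc1} and \eqref{usottoretta}, then fed back into \eqref{tesisottoretta}. The only (immaterial) difference is that the paper phrases the conclusion by taking $\mu\downarrow\musG$ along masses where \eqref{usottoretta} is satisfiable, rather than by excluding small masses a priori.
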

	
\begin{proof}
Take an arbitrary mass $\mu>\musG$. Since, by \eqref{critmass} and \eqref{sottoretta},
$\elevel_\G(\mu)<\frac 1 2\lambda_1\mu$, there exists
$u\in\Hmu(\G)$ satisfying \eqref{usottoretta}. If $p=4$, then \eqref{tesisottoretta}
reduces to $C\mu\geq 1$, and letting $\mu\downarrow \musG$ yields $C\musG\geq 1$.
When $p>4$, \eqref{tesisottoretta} is even more restrictive:
combining \eqref{stimalinf2} with \eqref{coerc1} we have
\[
\Vert u\Vert_\infty^2\leq C_1 E(u,\G)+C_2 \mu^{\frac {p+2}{6-p}}
\leq
\frac {C_1} 2\lambda_1\mu+C_2\mu^{\frac {p+2}{6-p}},
\]
so that $\Vert u\Vert_\infty^2\leq C_3\mu$
if $\mu$ is small enough. Then, again,
\eqref{tesisottoretta} yields an a priori lower bound $\mu\geq C_4>0$,
whence $\musG\geq C_4$.
\end{proof}

\begin{lemma}\label{gsmcrit}
 Let $\G=\To$ or $\G=\T$. If $p\in (4,6)$, then there exists a ground state of mass $\musG$.
\end{lemma}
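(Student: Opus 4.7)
The plan is to approach the critical mass $\musG$ from above, where Proposition~\ref{levelargument} already guarantees existence of ground states, and then extract a limit after renormalizing back to mass $\musG$. First I would invoke Proposition~\ref{critm>0} to secure $\musG>0$, and pick any sequence $\mu_n\downarrow\musG$ with $\mu_n>\musG$. For each $n$, Proposition~\ref{levelargument} furnishes a ground state $u_n\in H^1_{\mu_n}(\G)$ of mass $\mu_n$. Since $\elevel_\G$ is concave on $[0,+\infty)$ by Theorem~\ref{THM0} and therefore continuous at $\musG>0$, one has $E(u_n,\G)=\elevel_\G(\mu_n)\to \elevel_\G(\musG)=\tfrac12\lambda_1\musG$.

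The central quantitative ingredient is the a priori estimate \eqref{tesisottoretta}, which applies to each $u_n$ thanks to $E(u_n,\G)\leq\tfrac12\lambda_1\mu_n$ (cf.~\eqref{sottoretta}). It reads
\[
\|u_n\|_\infty^{p-4}\,\mu_n \ge \frac{1}{C},
\]
and, as $p>4$ and $\mu_n$ remains bounded, this forces $\liminf_n\|u_n\|_\infty>0$. I expect this to be the only genuinely delicate step of the proof, and also the precise reason why the lemma fails to extend to $p=4$: in that borderline case \eqref{tesisottoretta} degenerates into a lower bound on $\mu_n$ alone and provides no information on $u_n$, which is exactly the obstruction flagged in Remark~\ref{remopen}.

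Next I would renormalize: setting $\tilde u_n:=\sqrt{\musG/\mu_n}\,u_n$ produces $\tilde u_n\in H^1_{\musG}(\G)$, while the uniform bounds on $\|u_n'\|_2$ and $\|u_n\|_p$ supplied by \eqref{coerc1} and \eqref{gnmu}, combined with $\musG/\mu_n\to 1$, give
\[
E(\tilde u_n,\G)=\frac{\musG}{\mu_n}\cdot\frac12\|u_n'\|_2^2-\Bigl(\frac{\musG}{\mu_n}\Bigr)^{p/2}\frac1p\|u_n\|_p^p=E(u_n,\G)+o(1)\longrightarrow \elevel_\G(\musG).
\]
Hence $\{\tilde u_n\}$ is a minimizing sequence for $\elevel_\G(\musG)$; moreover, since the rescaling factor converges to $1$, the lower bound on $\|u_n\|_\infty$ transfers to $\|\tilde u_n\|_\infty$, yielding $\liminf_n\|\tilde u_n\|_\infty>0$. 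Proposition~\ref{conv} then delivers a ground state of mass $\musG$, closing the argument.
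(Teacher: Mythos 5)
Your proposal is correct and follows essentially the same route as the paper: approach $\musG$ from above with ground states of mass $\mu_n\downarrow\musG$, use \eqref{tesisottoretta} (valid since $E(u_n,\G)=\elevel_\G(\mu_n)\le\tfrac12\lambda_1\mu_n$) to extract the uniform lower bound on $\|u_n\|_\infty$ from $p>4$, renormalize to mass $\musG$, verify the minimizing-sequence property, and conclude via Proposition~\ref{conv}. Your remark on why the argument degenerates at $p=4$ also matches the paper's discussion.
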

\begin{proof}
Set $\mu_n=\musG+\frac 1 n$, let
$v_n\in H^1_{\mu_n}(\G)$ be a ground state of mass $\mu_n$ (which exists by
Proposition~\ref{levelargument}),  and define the renormalized functions
\begin{equation}\label{defvnun}
u_n:=\sqrt{\frac{\musG}{\mu_n}}\,\,v_n\in H^1_{\musG}(\G).
\end{equation}
Since the $v_n$'s are equibonded in $H^1(\G)$ by \eqref{coerc1},
and since $\musG/\mu_n \to 1$, as $n\to\infty$  we have
\begin{align*}
E(u_n,\G) &=\frac12\frac{\musG}{\mu_n}\int_\G|v_n'|^2\dx-
\frac1p\Big(\frac{\musG}{\mu_n}\Big)^{\frac{p}2}\int_\G|v_n|^p\dx
 = E(v_n,\G) + o(1)\\ &=\elevel_\G(\mu_n)+o(1) \le \frac12 \lambda_1 \mu_n + o(1) = \frac12 \lambda_1 \musG + o(1)
\end{align*}
having used \eqref{sottoretta}.
Recalling that $\elevel_\G(\musG)=\frac 1 2\lambda_1\musG$, this shows that
$\{u_n\}$ is a minimizing sequence for $E$ on $H^1_{\musG}(\G)$. Furthermore,
since each $v_n$ satisfies $E(v_n,\G) \le \frac12 \lambda_1\mu_n$,
we can let $u=v_n$ and $\mu=\mu_n$ in \eqref{tesisottoretta}, i.e.
\[
1\leq C\Vert v_n\Vert_\infty^{p-4}\,\mu_n\quad\forall n,
\]
with $C$ independent of $n$. Finally, since $\mu_n\to \musG>0$,
this estimate is inherited by the $u_n$'s via \eqref{defvnun},
and we obtain that $\Vert u_n\Vert_\infty\geq\delta$
for some $\delta>0$ independent of $n$.
Then, the existence of a ground state of mass $\musG$ follows
from Proposition~\ref{conv}.
\end{proof}

\proof[{\bf Proof of Theorem \ref{THM 1}}] Note that \eqref{inf} is immediate
from \eqref{sottoretta} and \eqref{critmass}.
The part concerning ground states has been proved in the previous lemmas and propositions.
\endproof

\section{Radial ground states: proof of Theorem \ref{THM 2}}
\label{sec:rad}
First we observe that the radial problem \eqref{GSPrad},	when
$\G=\To$ and for any given $\mu>0$, is equivalent
to the same problem when $\G=\T$, as soon as $\mu$ is replaced with
$3\mu$. Indeed,  regarding $\T$ as
three copies of $\To$ joined together at a given vertex $\radice\in\T$, any radial function
$u\in H^1_{r,3\mu}(\T)$ consists of three copies of the same
radial function in $\Hr(\To)$ and, clearly, the converse is also true.
Therefore, recalling \eqref{radlevel} and \eqref{critmassrad}, we have
\[
\elevel_{\To,r}(\mu)=
\frac 1 3\elevel_{\T,r}(3\mu)\quad\forall\mu\geq 0,\qquad
\musTor=\frac 1 3\musTr,
\]
and there is a one-to-one correspondence between radial ground states in $H^1_\mu(\To)$
and radial ground states in $H^1_{3\mu}(\T)$.
Thus, in proving Theorem~\ref{THM 2}, it suffices to consider
the case where $\G=\To$.

Thanks to the following proposition, several results  from the previous sections
become available also for the radial problem.

\begin{proposition}
  The constant $\lambda_1$ is unaltered, if  the infimum of the Rayleigh
  quotient in \eqref{rayleigh} is restricted to radial functions. In other words,
\begin{equation}
\label{rayleighsym}
\eign=\inf_{\substack{u\in H^1(\To)\\\text{$u$
radial, $\not\equiv 0$}}}\frac{\int_{\To}|u'|^2\dx}{\int_{\To}|u|^2\dx}.
\end{equation}
\end{proposition}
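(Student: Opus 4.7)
The inequality
\[
\eign \;\le\; \inf_{\substack{u\in H^1(\To)\\\text{$u$ radial, $\not\equiv 0$}}}\frac{\int_{\To}|u'|^2\dx}{\int_{\To}|u|^2\dx}
\]
is immediate, since restricting the infimum in \eqref{rayleigh} to a smaller class can only raise its value. For the reverse inequality, the plan is to construct, for every nontrivial $u\in H^1(\To)$, a radial $\tilde u\in H^1(\To)$ satisfying $\|\tilde u\|_2 = \|u\|_2$ and $\|\tilde u'\|_2 \le \|u'\|_2$; the Rayleigh quotient of $\tilde u$ will then not exceed that of $u$, and the two infima must coincide. The natural candidate is the quadratic spherical mean: for $r\ge 0$, let $S_r\subset\To$ denote the (finite) set of points at distance $r$ from the root $\radice$, so that $|S_r|=2^k$ for $r\in(k,k+1)$ and $|S_k|=2^{k-1}$ for integers $k\ge 1$; then set
\[
V(r)^2 \;:=\; \frac{1}{|S_r|}\sum_{x\in S_r} u(x)^2,\qquad \tilde u(x):=V(|x|).
\]

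First I would verify that $V$ extends continuously across each integer $r=k$, so that $\tilde u\in H^1(\To)$. A short direct computation gives $V(k^{\pm})^2 = 2^{1-k}\sum_{|v|=k}u(v)^2$: each vertex $v$ at depth $k$ is approached from its parent edge by a single point of $S_r$ as $r\to k^-$, but from each of its two child edges by one point of $S_r$ as $r\to k^+$, so the factor $2$ exactly compensates for the doubling of $|S_r|$ across the branching. The $L^2$ identity $\|\tilde u\|_2 = \|u\|_2$ then follows immediately from Fubini, since integrating the spherical mean of $u^2$ against $|S_r|\,dr$ reproduces $\int_\To u^2\dx$.

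The substantive step is the gradient bound. Differentiating the identity $|S_r|\,V(r)^2 = \sum_{x\in S_r} u(x)^2$ on $(k,k+1)$ gives $|S_r|\,V(r) V'(r) = \sum_{x\in S_r} u(x)u'(x)$, where $u'(x)$ denotes the derivative along the edge through $x$ in the direction of increasing $r$. Pointwise Cauchy--Schwarz then yields
\[
V'(r)^2 \;\le\; \frac{1}{|S_r|}\sum_{x\in S_r} u'(x)^2
\]
wherever $V>0$. Multiplying by $|S_r|$, integrating over $(k,k+1)$ and summing over $k$ produces $\|\tilde u'\|_2^2\le\|u'\|_2^2$; the zero set $\{V=0\}$ contributes nothing, since any nonnegative absolutely continuous function has vanishing derivative almost everywhere on its zero set. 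The main point requiring care is the continuity verification at vertices, where the combinatorial geometry of the binary tree enters through the precise cancellation between the jump of $|S_r|$ and the $2$-fold branching of edges; the Cauchy--Schwarz computation is otherwise entirely routine. With these pieces in place, $\tilde u$ witnesses that the radial infimum is at most $\eign$, completing the proof.
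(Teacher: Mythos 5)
Your proof is correct and follows essentially the same route as the paper: the paper also symmetrizes via the quadratic average $v(t)=\bigl(\#X(t)^{-1}\sum_{x\in X(t)}u(x)^2\bigr)^{1/2}$ over the sphere of radius $t$ and invokes Cauchy--Schwarz to get $\|w\|_2=\|u\|_2$ and $\|w'\|_2\le\|u'\|_2$, leaving the verification as ``elementary computations.'' You have simply written out those computations (the continuity matching at vertices, the Fubini identity for the $L^2$ norm, and the pointwise Cauchy--Schwarz bound on the derivative), all of which check out.
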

The claim is an indirect consequence  of very general results  concerning
the Laplacian on metric trees (see \cite{solomyak}, in particular \S~3.2).
For completeness, however, we give a self-contained proof, based on a quadratic average
of $u(x)$ along points at the same depth.
Given $u\in H^1(\To)$, let
\[
v(t)=\sqrt{\frac 1 {\# X(t)}\sum_{x\in X(t)}
u(x)^2},\quad t\geq 0,\quad X(t)=\left\{x\in\To\,:\,\,|x|=t\right\}
\]
(note that $\# X(t)=2^{\lfloor t\rfloor}$ for a.e. $t$) and define
a \emph{radial} function $w\in H^1(\To)$ by letting $w(x):=v(|x|)$.
After elementary computations, using Cauchy--Schwarz one can easily check that
\[
\Vert w\Vert_2^2=
\Vert u\Vert_2^2,\quad
\Vert w'\Vert_2^2\leq
\Vert u'\Vert_2^2,
\]
so that passing from $u$ to $w$
does not increase the Rayleigh quotient. This symmetrization procedure,
however, \emph{decreases} the $L^p$ norm, hence
the NLS energy might increase
(by this technique, therefore, one cannot solve the symmetry   question for ground states).

\begin{remark}\label{remlevsym} The concavity of the radial level function
$\elevel_{\To,r}$ defined in \eqref{radlevel}, its negativity for large $\mu$, and
the inequality
\begin{equation}
\label{sottorettarad}
\elevel_{\To,r} (\mu) \le \frac12 \lambda_1\mu\qquad \forall \mu \ge 0,
\end{equation}	
can be proved
exactly as done for $\elevel_\G$  in Section~\ref{sec:prelim}, in the
proof of Theorem~\ref{THM0}. Indeed, it suffices to replace everywhere
the word
``function'' with ``radial function'', $\elevel_\G$ with
$\elevel_{\To,r}$,
$H^1_\mu(\G)$ with
$H^1_{\mu,r}$ (as defined in \eqref{defHunomur}) and so on, and refer to
\eqref{rayleighsym} instead of \eqref{rayleigh}. Moreover, with the same notational
changes,
also Lemma~\ref{dichotomy} remains valid (now writing ``\emph{radial} ground state'')
in the radial setting.
\end{remark}

Contrary to Lemma~\ref{dichotomy},
the proof of Proposition~\ref{conv} cannot be adapted to the radial case,
since in Part~1 each function $u_n$ (that now would be radial, with respect
to some fixed origin $\radice\in\T$) is initially
subjected to a rigid motion, hence the function $u$
obtained in the limit might fail to be radial. Therefore, a different
proof will be given,
based on the fact that
any radial function in $H^1(\To)$ has an exponential decay away from the root.
\begin{lemma}\label{lemmadecay}
If $u\in H^1(\To)$ is radial, then
\begin{equation}
\label{espdecay}
  |u(x)|^2 \leq 2^{-|x|\,} C\, \Vert u'\Vert_{L^2(\To)}^2\qquad\forall x\in \To,
\end{equation}
for some universal constant $C$ independent of $u$.
\end{lemma}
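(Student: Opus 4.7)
The plan is to reduce the statement to a one-dimensional weighted estimate. Since $u$ is radial, I would write $u(x) = v(|x|)$ for a continuous function $v \colon [0,\infty) \to \R$ that is smooth on each interval $(k, k+1)$ (with derivative jumps at integers dictated by the Kirchhoff condition). Using that $\#X(t) = 2^{\lfloor t \rfloor}$ for a.e. $t$, the norms on $\To$ translate into weighted norms on $\R^+$:
\[
\|u\|_{L^2(\To)}^2 = \int_0^\infty 2^{\lfloor t \rfloor} v(t)^2 \, dt, \qquad \|u'\|_{L^2(\To)}^2 = \int_0^\infty 2^{\lfloor t \rfloor} v'(t)^2 \, dt.
\]

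Next, for any $0 \leq t < y$, I would apply the fundamental theorem of calculus together with Cauchy--Schwarz, splitting $|v'(s)| = 2^{-\lfloor s \rfloor/2} \cdot 2^{\lfloor s \rfloor /2}|v'(s)|$, to obtain
\[
|v(t) - v(y)|^2 \leq \left(\int_t^y 2^{-\lfloor s \rfloor}\,ds\right)\left(\int_t^y 2^{\lfloor s\rfloor} v'(s)^2\,ds\right) \leq \left(\int_t^\infty 2^{-\lfloor s\rfloor}\,ds\right)\|u'\|_{L^2(\To)}^2.
\]
An elementary geometric-series computation gives $\int_t^\infty 2^{-\lfloor s \rfloor}\,ds \leq 2 \cdot 2^{-\lfloor t\rfloor} \leq 4\cdot 2^{-t}$, which is exactly the weight $2^{-|x|}$ that appears on the right-hand side of \eqref{espdecay}.

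The final step is to let $y \to \infty$ and kill the $v(y)$ term. To this end I would note that, since $2^{\lfloor s\rfloor} \geq 1$, the weighted identities above force both $v$ and $v'$ to lie in the usual (unweighted) $L^2(\R^+)$, hence $v \in H^1(\R^+)$ and, by the one-dimensional Sobolev embedding $H^1(\R^+) \hookrightarrow C_0(\R^+)$, we have $v(y) \to 0$ as $y \to \infty$. Passing to the limit in the previous inequality then yields $|v(t)|^2 \leq 4 \cdot 2^{-t}\|u'\|_{L^2(\To)}^2$, establishing \eqref{espdecay} with $C = 4$. The only mildly delicate point is justifying this pointwise decay at infinity (as opposed to just integrability), but the embedding into $C_0(\R^+)$ handles it cleanly; everything else is a one-line Cauchy--Schwarz bookkeeping calculation.
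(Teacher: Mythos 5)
Your argument is correct, but it follows a genuinely different route from the paper's. The paper does not unfold $\To$ onto a weighted half-line at all: it takes the $2^d$ subtrees of $\To$ rooted at depth $d$, applies to each of them the $L^\infty$ estimate $\Vert u\Vert_{L^\infty}^2\leq C\Vert u'\Vert_{L^2}^2$ (itself a byproduct of the reinforced Poincar\'e inequality of Theorem~\ref{teopoinc}), and uses radial symmetry to say that all $2^d$ restrictions are identical, so that summing the right-hand sides over the subtrees produces the factor $2^{-d}$. Your proof instead reduces everything to the weighted one-dimensional identity $\Vert u'\Vert_{L^2(\To)}^2=\int_0^\infty 2^{\lfloor t\rfloor}|v'(t)|^2\,dt$ and a single Cauchy--Schwarz with the splitting $|v'|=2^{-\lfloor s\rfloor/2}\cdot 2^{\lfloor s\rfloor/2}|v'|$; the geometric-series bound $\int_t^\infty 2^{-\lfloloor s\rfloor}\,ds\le 4\cdot 2^{-t}$ is right, and the passage $y\to\infty$ via $v\in H^1(\R^+)\hookrightarrow C_0$ is legitimate (one could even get by with $\liminf_{y\to\infty}|v(y)|=0$ from $v\in L^2$). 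What your approach buys is self-containedness and an explicit constant $C=4$: it does not rely on Theorem~\ref{teopoinc} or on the surgery construction behind it, and it is essentially a weighted Hardy-type computation. What the paper's approach buys is that it recycles machinery already in place and generalizes verbatim to homogeneous trees of arbitrary branching without redoing the measure computation. One small caution: you describe $v$ as smooth on each $(k,k+1)$ with derivative jumps ``dictated by the Kirchhoff condition,'' but the lemma concerns arbitrary radial $H^1$ functions, which need not be smooth nor satisfy Kirchhoff conditions; all your argument actually uses (and all that is true) is that $v$ is absolutely continuous with $\int_0^\infty 2^{\lfloor t\rfloor}|v'|^2\,dt<\infty$, so this phrasing should be corrected but does not affect the proof.

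(Typographical note: in the display for $\int_t^\infty 2^{-\lfloor s\rfloor}\,ds$ above, read $\lfloor s\rfloor$ throughout.)
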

\begin{proof}
Given an integer $d\geq 0$,
let $\G_i$ ($1\leq i\leq 2^d$) denote the $2^d$ subtrees of $\To$
at depth $d$, that is,
the $2^d$ rooted trees
having, as  root, a vertex $y\in \To$ such that $|y|=d$
(e.g., if $d=3$, in Fig. \ref{Figalberi}~(a) these trees are those starting with a dashed line).
Applying  \eqref{stimalinf2} with
$\G$ replaced by one of these $\G_i$'s yields
$\Vert u\Vert_{L^\infty(\G_i)}^2\leq C \Vert u'\Vert_{L^2(\G_i)}^2$
but, since by radial symmetry the restrictions of $u$ to each $\G_i$ are all equal,
summing over $i$ we obtain
\[
2^d \, \Vert u\Vert_{L^\infty(\G_i)}^2\leq
C\sum_{i=1}^{2^d}\int_{\G_i} |u'|^2\,dx
=
C\int_{\bigcup \G_i} |u'|^2\,dx
\leq C\int_{\To} |u'|^2\,dx,
\]
which can be rewritten as
\[
\Vert u\Vert_{L^\infty(\G_i)}^2\leq
2^{-d} \,C\,
\Vert u'\Vert_{L^2(\To)}^2,\quad
1\leq i\leq 2^d.
\]
But every $x\in \To$ belongs to
one of the subtrees $\G_i$ at depth $d$, as soon as $d\leq |x|$. Choosing $d$ as  the integer
part of $|x|$, one has $2^{-d}\leq 2^{1-|x|}$,
and \eqref{espdecay} follows immediately.
\end{proof}

We can now prove the radial analogue of Proposition~\ref{conv}.

\begin{proposition}
\label{convrad}
Given $\mu>0$,  let $\{u_n\}\subset H^1_{\mu,r}(\To)$ be a minimizing sequence for
the radial problem \eqref{GSPrad},
i.e.
$\lim_{n} E(u_n,\To)=\elevel_{\To,r}(\mu)$.
If
\begin{equation}
\label{inftyrad}
\liminf_n \|u_n\|_{L^\infty(\To)} >0,
\end{equation}
then  there exists a radial ground state $u\in H^1_{\mu,r}(\To)$ of mass $\mu$.
\end{proposition}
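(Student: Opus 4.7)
The plan is to follow the architecture of Proposition \ref{conv}, but replace the translation-invariance argument (which would break radial symmetry with respect to the fixed root $\radice$) by the exponential decay estimate of Lemma \ref{lemmadecay}. The key idea is that any uniform $L^\infty$ lower bound for a radial $H^1$-bounded sequence automatically forces the near-maxima to lie within a fixed compact region of the tree, where standard compactness tools become available.

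First I would use the coercivity estimate \eqref{coerc1} to deduce that $\{u_n\}$ is uniformly bounded in $H^1(\To)$, and extract a subsequence converging weakly to some $u\in H^1(\To)$. Since radial functions form a closed subspace of $H^1(\To)$ (for instance because the constraints $u(x)=u(y)$ for pairs with $|x|=|y|$ are continuous linear conditions on $H^1(\To)$), the weak limit $u$ is itself radial.

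Next I would combine the uniform bound on $\|u_n'\|_{L^2(\To)}$ with Lemma \ref{lemmadecay} to obtain a single exponential decay estimate $|u_n(x)|^2\le C\,2^{-|x|}$ valid for all $n$ and all $x\in\To$. The assumption \eqref{inftyrad} provides, along a further subsequence, points $x_n\in\To$ at which $|u_n(x_n)|\ge\delta$ for some fixed $\delta>0$; plugging into the decay estimate forces $|x_n|\le R$ for some $R$ independent of $n$. The closed ball $B_R:=\{x\in\To:|x|\le R\}$ is a compact metric subgraph, so Rellich compactness upgrades the weak convergence $u_n\rightharpoonup u$ to uniform convergence on $B_R$. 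Passing to one more subsequence so that $x_n\to x_\infty\in B_R$, I would conclude $|u(x_\infty)|\ge\delta$, hence $u\not\equiv 0$.

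Finally, I would invoke the radial version of the dichotomy lemma (Lemma \ref{dichotomy}, valid in the radial setting by Remark \ref{remlevsym}) to conclude that $u\in H^1_{\mu,r}(\To)$ and $E(u,\To)=\elevel_{\To,r}(\mu)$, i.e.\ $u$ is a radial ground state of mass $\mu$. The main obstacle, relative to the proof of Proposition \ref{conv}, is the absence of a transitive isometry group acting on radial functions: one cannot normalize the location of the peak of $u_n$ by a rigid motion. Lemma \ref{lemmadecay} is precisely the substitute that confines the peaks a priori to a compact portion of $\To$, and is the technical ingredient on which the whole argument hinges.
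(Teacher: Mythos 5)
Your proposal is correct and follows essentially the same route as the paper: coercivity via \eqref{coerc1}, extraction of a radial weak limit, the uniform exponential decay of Lemma~\ref{lemmadecay} as the substitute for the translation argument of Proposition~\ref{conv}, and the radial dichotomy lemma to conclude. The only cosmetic difference is that you use the decay to confine the near-maxima to a compact ball and pass to the limit there, whereas the paper uses the same decay to upgrade $L^\infty_{\mathrm{loc}}$ convergence to global $L^\infty$ convergence; both yield $u\not\equiv 0$ identically.
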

\begin{proof}
  By \eqref{coerc1}, $\{u_n\}$ is bounded in $H^1(\To)$ and therefore (up to subsequences)
  $u_n\rightharpoonup u$ in $H^1(\To)$ (whence $u_n\to u$ in $L_{\text{loc}}^\infty(\To)$)
   for some radial $u \in H^1(\To)$.
 Now observe that, since by \eqref{coerc1} $\Vert u'_n\Vert_2^2\leq C$, from
 \eqref{espdecay} we see that $|u_n(x)|^2 \leq C 2^{-|x|}$ with $C$ independent of $n$,
 hence the convergence $u_n\rightharpoonup u$ in $L_{\text{loc}}^\infty(\To)$
 is, in fact, a convergence in $L^\infty(\To)$. Then \eqref{inftyrad} implies
 that $u\not=0$, hence $u$ is a radial ground state by the radial version of
 Lemma~\ref{dichotomy} (see Remark~\ref{remlevsym}).
\end{proof}

For radial functions with relatively small energy, a consequence of
the exponential decay \eqref{espdecay}
is the following lower bound for the mass and the $L^\infty$ norm.
\begin{lemma}\label{lemmalowerbs}
Let $p\in (2,6)$. If $\mu>0$ and $u\in H^1_{\mu,r}(\To)$ is any radial function
  such that $E(u,\To)\leq \frac 1 2\lambda_1\mu$, then
\begin{equation}
  \label{lowerbs}
  \mu\geq \delta_1,\qquad \Vert u\Vert_{L^\infty(\To)}^2 \,\mu^{\frac{p-2}{6-p}}\geq\delta_2
\end{equation}
for some two constants $\delta_i>0$  depending only on $p$.
\end{lemma}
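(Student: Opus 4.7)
The plan is to combine the a priori estimates from Section~\ref{sec:prelim} --- namely \eqref{tesisottoretta}, \eqref{tesisottoretta2}, \eqref{tesisottoretta3}, and \eqref{coerc1} --- with the radial exponential decay $|u(x)|^2\le 2^{-|x|}C\|u'\|_2^2$ of Lemma~\ref{lemmadecay}. The first step is to use this decay to derive a radial improvement of the Gagliardo--Nirenberg inequality: splitting $|u|^p=|u|^{p-2}\cdot|u|^2$ and pointwise-bounding the first factor, one obtains $\|u\|_p^p\le C\mu\|u'\|_2^{p-2}$ for all radial $u\in H^1(\To)$. Combining this with the hypothesis-driven bound $\|u'\|_2^2\le 2\lambda_1\mu+C\mu^{(p+2)/(6-p)}$ from \eqref{coerc1}, and with \eqref{tesisottoretta2}, then yields $\|u\|_\infty^2\le C\mu^{p/2}$ whenever $\mu\le 1$.

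For the first conclusion $\mu\ge\delta_1$, I would compare this upper bound with the lower bound contained in \eqref{tesisottoretta}, namely $\|u\|_\infty^{p-4}\mu\ge 1/C$. When $p\in[4,6)$ this yields either $\mu\ge 1/C$ directly (case $p=4$) or the lower bound $\|u\|_\infty^2\ge C\mu^{-2/(p-4)}$ (case $p>4$); matching the latter against $\|u\|_\infty^2\le C\mu^{p/2}$ produces the algebraic inequality $\mu^{(p-2)^2/(2(p-4))}\ge 1/C$, which forces $\mu\ge\delta_1>0$. When $p\in(2,4)$ the matching fails because \eqref{tesisottoretta} now provides only an upper bound on $\|u\|_\infty$; here I would invoke \eqref{tesisottoretta3} to constrain the spread of $u$, noting that \eqref{tesisottoretta3} together with $\|u\|_2^2=\mu$ and the elementary volume estimate $|\{|x|\le R\}|\le 2^{R+1}$ yields an implicit inequality linking $\mu$ and $\|u\|_\infty$ that excludes arbitrarily small $\mu$ --- essentially, the Hardy-type control prevents ``spreading-out'' sequences from satisfying the hypothesis.

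The second conclusion $\|u\|_\infty^2\mu^{(p-2)/(6-p)}\ge\delta_2$ then follows by rearranging \eqref{tesisottoretta} and invoking the now-established $\mu\ge\delta_1$: for $p>4$ one has $\|u\|_\infty^2\ge C\mu^{-2/(p-4)}$, so multiplying by $\mu^{(p-2)/(6-p)}$ gives a power of $\mu$ that is bounded below thanks to $\mu\ge\delta_1$, with analogous arguments covering $p\le 4$. The main obstacle throughout is the range $p\in(2,4)$: the clean algebraic combinations that work for $p\in[4,6)$ break down, and one genuinely needs the Hardy-type refinement \eqref{tesisottoretta3} in concert with the radial exponential decay to rule out the scenario of a highly spread-out radial $u$ with $\|u\|_\infty\to 0$ at fixed $\mu$.
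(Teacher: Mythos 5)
Your argument for the first bound when $p\in[4,6)$ is correct, but it is essentially the non-radial argument of Proposition~\ref{critm>0}: note that your ``radial improvement'' $\|u\|_p^p\le C\mu\|u'\|_2^{p-2}$ already follows from \eqref{stimalinf2} and $\|u\|_p^p\le\|u\|_\infty^{p-2}\mu$, with no use of radiality. The genuine content of Lemma~\ref{lemmalowerbs} is precisely the range $p\in(2,4)$, and there your proposal has a real gap. The combination you suggest --- \eqref{tesisottoretta3}, $\|u\|_2^2=\mu$ and the volume count $|\{|x|\le R\}|\le 2^{R+1}$ --- cannot exclude small $\mu$: \eqref{tesisottoretta3} is an \emph{upper} bound on the Hardy integral, so to exploit it you would need a lower bound on $\int_{\To}|u|^2(1+|x|)^{-2}\,dx$ in terms of $\mu$, and no such bound is available, because the exponential volume growth of $\To$ allows the mass to spread out. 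Even the radial decay \eqref{espdecay} gives only $|u(x)|^2\le C2^{-|x|}\|u'\|_2^2$, which is exactly borderline non-integrable on $\To$ (there are $\sim 2^t$ points at depth $t$), so it does not force concentration near the root. Carrying out your scheme gives $\mu\le 2^{R+1}\|u\|_\infty^2+C(1+R)^2\|u\|_\infty^{p-2}\mu$, and optimizing in $R$ leads to $\mu\le C\|u\|_\infty^2\,2^{\,c\|u\|_\infty^{-(p-2)/2}}$, whose right-hand side blows up as $\|u\|_\infty\to0$: there is no contradiction with $\mu$ small.

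The step you are missing is the pointwise factorization used in the paper's proof: $\int_{\To}|u|^p\,dx\le\bigl(\sup_{x}|u(x)|^{p-2}(1+|x|)^2\bigr)\int_{\To}|u|^2(1+|x|)^{-2}\,dx$, which combined with \eqref{tesisottoretta3} lets $\|u\|_p^p$ cancel from both sides and yields $1\le C\sup_x|u(x)|^{p-2}(1+|x|)^2$. Radiality then enters through \eqref{espdecay}: the exponential decay beats the polynomial weight, so $\sup_x|u(x)|^{r}(1+|x|)^2\le C_r\|u'\|_2^{r}$ for every $r>0$, and \eqref{coerc1} converts this into $1\le C\mu^{(p-2)/2}$ for small $\mu$, uniformly in $p\in(2,6)$. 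A variant of the same chain (splitting the supremum as $\|u\|_\infty^{(p-2)/2}\cdot\sup_x|u(x)|^{(p-2)/2}(1+|x|)^2$) gives the second bound in \eqref{lowerbs}. Your route to that second bound via ``rearranging \eqref{tesisottoretta}'' produces a lower bound on $\|u\|_\infty$ only when $p>4$; for $p\in(2,4]$ inequality \eqref{tesisottoretta} gives an upper bound (or nothing at $p=4$), so the phrase ``analogous arguments covering $p\le4$'' hides exactly the case the lemma was designed to settle.
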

\begin{proof}
Under our assumptions, we can combine \eqref{tesisottoretta3}
with
the estimate
\[
\int_{\To}|u|^p\dx
\leq\left(\sup_{x\in\To}|u(x)|^{p-2}(1+|x|)^2 \right)\int_{\To}\f{|u|^2}{(1+|x|)^2}\dx,
\]
to obtain
\begin{equation}\label{dis2}
1\leq C \sup_{x\in\To}|u(x)|^{p-2}(1+|x|)^2,
\end{equation}
whence in particular
\begin{equation}\label{dis3}
1\leq C \Vert u\Vert_\infty^{\frac {p-2}2}\cdot\sup_{x\in\To}|u(x)|^{\frac{p-2}2}(1+|x|)^2.
\end{equation}
Now, since $u$ is radial, using \eqref{espdecay} we obtain, for every $r>0$,
\[
\sup_{x\in\To}|u(x)|^{r}(1+|x|)^2
\leq
C_r \,\Vert u'\Vert_2^r
\]
where $C_r$ depends only on $r$. Moreover, using \eqref{coerc1}
combined with $E(u,\G)\leq \frac 1 2\lambda_1\mu$,
we obtain from the previous inequality
\begin{equation}
\label{dis4}
\sup_{x\in\To}|u(x)|^{r}(1+|x|)^2
\leq
C_r \left( 2\lambda_1\mu+C \mu^{\frac {p+2}{6-p}}\right)^{\frac {r}2}\quad\forall r>0.
\end{equation}
Choosing $r=p-2$ and plugging into \eqref{dis2}, one proves the first bound in
\eqref{lowerbs}.
In the light of this, since $\frac{p+2}{6-p}>1$, \eqref{dis4} can be simplified
to
\begin{equation*}
\sup_{x\in\To}|u(x)|^{r}(1+|x|)^2
\leq
\tilde C_r \mu^{\frac {p+2}{6-p} \,\cdot\,\frac {r}2}\quad\forall r>0.
\end{equation*}
Choosing $r=\frac{p-2}2$ and plugging into \eqref{dis3}, one proves the second bound in
\eqref{lowerbs}.
\end{proof}

This enables us to show that the radial analogue of Proposition~\ref{critm>0} holds true (even in
a stronger form, being valid regardless of $p$).

\begin{proposition}
\label{radmu>0}
For every $p\in (2,6)$,  one has $\mu_{\To,r}^*>0$.
\end{proposition}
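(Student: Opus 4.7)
The plan is to show that Proposition \ref{radmu>0} is a direct corollary of Lemma \ref{lemmalowerbs} combined with the upper bound \eqref{sottorettarad} on the radial level function. The essential work has already been done: Lemma \ref{lemmalowerbs} says that any radial function $u \in H^1_{\mu,r}(\To)$ whose energy satisfies $E(u,\To)\leq \tfrac12\lambda_1\mu$ must have mass bounded below by a universal constant $\delta_1 > 0$. So it suffices to take the contrapositive.

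More concretely, I would argue as follows. Fix $\mu \in (0,\delta_1)$ and suppose, for contradiction, that $\elevel_{\To,r}(\mu) < \tfrac12\lambda_1\mu$. Then by definition of infimum there would exist $u \in H^1_{\mu,r}(\To)$ with $E(u,\To) < \tfrac12\lambda_1\mu$, and in particular $E(u,\To) \leq \tfrac12\lambda_1\mu$. Lemma \ref{lemmalowerbs} would then force $\mu \geq \delta_1$, contradicting the choice of $\mu$. Therefore $\elevel_{\To,r}(\mu) \geq \tfrac12\lambda_1\mu$ for every $\mu \in (0,\delta_1)$, and combined with the opposite inequality \eqref{sottorettarad} (available in the radial setting by Remark \ref{remlevsym}) we conclude that
\[
\elevel_{\To,r}(\mu) = \tfrac12\lambda_1\mu \qquad \forall \mu \in [0,\delta_1].
\]
Recalling the definition \eqref{critmassrad} of $\musTor$, this yields $\musTor \geq \delta_1 > 0$, and by the reduction at the beginning of Section~\ref{sec:rad} (which gives $\musTr = 3\musTor$), the analogous conclusion holds on $\T$ as well.

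There is essentially no obstacle left once Lemma \ref{lemmalowerbs} is in hand, since the proposition is just its contrapositive rephrased in terms of the level function. The interesting point to emphasize is that, unlike the non-radial statement of Proposition \ref{critm>0} (which required $p \in [4,6)$), here the lower bound on $\mu$ works for the whole range $p \in (2,6)$. This is precisely because the radial setting makes available the exponential decay of Lemma \ref{lemmadecay}, which was used to derive the stronger estimate \eqref{dis2} feeding into Lemma \ref{lemmalowerbs}.
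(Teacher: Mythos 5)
Your proposal is correct and follows essentially the same route as the paper: the paper takes any $\mu>\musTor$, notes that \eqref{critmassrad} and \eqref{sottorettarad} produce a radial $u$ with $E(u,\To)\le\frac12\lambda_1\mu$, applies Lemma~\ref{lemmalowerbs} to get $\mu\ge\delta_1$, and lets $\mu\downarrow\musTor$ — which is just the contrapositive packaging of your argument. The only cosmetic difference is that the paper approaches $\musTor$ from above rather than asserting equality of the level function on $[0,\delta_1]$, but both hinge on the same lemma in the same way.
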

	
\begin{proof} Consider any mass $\mu>\musTor$. Then, by \eqref{critmassrad} and
\eqref{sottorettarad},
$\elevel_{\To,r}(\mu)<\frac 1 2\lambda_1\mu$, so that there exists
a radial function $u\in H^1_{\mu,r}(\To)$ satisfying
the assumptions of Lemma~\ref{lemmalowerbs}. Therefore
$\mu\geq \delta_1>0$,
and the claim follows letting $\mu\downarrow \musTor$.
\end{proof}

\begin{proposition}\label{propsym}
Let $p\in (2,6)$. If $\mu\geq \musTor$, then there exists a radial ground state $u\in H^1_{\mu,r}(\To)$
of mass $\mu$. Conversely, if $\nu\in (0,\musTor)$, there is no radial ground state
of mass $\nu$.
\end{proposition}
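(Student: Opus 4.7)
The plan is to mirror in the radial setting the three-step treatment given in the non-radial case by Propositions~\ref{levelargument}, \ref{prop_non existence} and Lemma~\ref{gsmcrit}, using Proposition~\ref{convrad} in place of Proposition~\ref{conv}, Lemma~\ref{lemmalowerbs} in place of the a priori estimate \eqref{tesisottoretta}, and Proposition~\ref{radmu>0} to guarantee $\musTor>0$. Throughout, I would work on $\To$ (since by the reduction at the beginning of the section the case $\G=\T$ follows from $\G=\To$ via the $3\mu$ rescaling).

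For the supercritical case $\mu>\musTor$, I would argue as in Proposition~\ref{levelargument}. By \eqref{critmassrad} and \eqref{sottorettarad} one has $\elevel_{\To,r}(\mu)<\frac12\lambda_1\mu$. Given a minimizing sequence $\{u_n\}\subset H^1_{\mu,r}(\To)$, if \eqref{inftyrad} failed, a subsequence would satisfy $\Vert u_n\Vert_\infty\to 0$, and since $\Vert u_n\Vert_2^2=\mu$ and $p>2$ one would get $\Vert u_n\Vert_p^p\to 0$; the Poincaré inequality \eqref{poinc1} would then yield $\elevel_{\To,r}(\mu)\geq\frac12\lambda_1\mu$, contradicting the strict inequality. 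Hence \eqref{inftyrad} holds and Proposition~\ref{convrad} produces a radial ground state.

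For the critical case $\mu=\musTor$ (which is strictly positive by Proposition~\ref{radmu>0}), I would adapt the proof of Lemma~\ref{gsmcrit}. Choose $\mu_n=\musTor+\frac1n$, take radial ground states $v_n\in H^1_{\mu_n,r}(\To)$ (which exist by the previous case), and set $u_n:=\sqrt{\musTor/\mu_n}\,v_n\in H^1_{\musTor,r}(\To)$. The a priori bound \eqref{coerc1} makes $\{v_n\}$ equibounded in $H^1$, so the elementary identity
\[
E(u_n,\To)=\tfrac{1}{2}\tfrac{\musTor}{\mu_n}\Vert v_n'\Vert_2^2-\tfrac{1}{p}\bigl(\tfrac{\musTor}{\mu_n}\bigr)^{p/2}\Vert v_n\Vert_p^p=E(v_n,\To)+o(1)
\]
combined with concavity (hence continuity) of $\elevel_{\To,r}$ at $\musTor$ shows that $E(u_n,\To)\to \elevel_{\To,r}(\musTor)$, so $\{u_n\}$ is a radial minimizing sequence. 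The quantitative heart of the argument is the non-vanishing of the $L^\infty$ norm: each $v_n$ satisfies $E(v_n,\To)=\elevel_{\To,r}(\mu_n)\leq\frac12\lambda_1\mu_n$ by \eqref{sottorettarad}, so Lemma~\ref{lemmalowerbs} yields $\Vert v_n\Vert_\infty^2\,\mu_n^{(p-2)/(6-p)}\geq\delta_2$; since $\mu_n\to\musTor>0$ this gives a uniform lower bound $\Vert v_n\Vert_\infty\geq c>0$, which passes to $u_n$ through the rescaling. Proposition~\ref{convrad} then supplies a radial ground state of mass $\musTor$.

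For the non-existence part, given $\nu\in(0,\musTor)$, I would simply replay Proposition~\ref{prop_non existence} in the radial class. The chain \eqref{piega}--\eqref{piega2} was obtained by $L^2$-rescaling a test function; rescaling preserves radial symmetry, so the identical computation gives $\elevel_{\To,r}(\musTor)<\frac{\musTor}{\nu}E(v,\To)$ for every radial $v\in H^1_{\nu,r}(\To)$ with $v\not\equiv 0$. Since $\elevel_{\To,r}(\musTor)=\frac12\lambda_1\musTor$, this yields $E(v,\To)>\frac12\lambda_1\nu\geq\elevel_{\To,r}(\nu)$ by \eqref{sottorettarad}, so no radial $v$ of mass $\nu$ can be a minimizer. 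The main obstacle in the whole argument is the critical case $\mu=\musTor$, where one must simultaneously (i) verify that the rescaled sequence remains minimizing and (ii) extract a uniform $L^\infty$ lower bound in the limit $\mu_n\downarrow\musTor$; step (ii) is exactly where Lemma~\ref{lemmalowerbs} combined with the positivity of $\musTor$ from Proposition~\ref{radmu>0} becomes indispensable, since if $\musTor$ were zero the factor $\mu_n^{(p-2)/(6-p)}$ would degenerate and the non-vanishing would fail.
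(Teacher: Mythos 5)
Your proposal is correct and follows essentially the same route as the paper: the case $\mu>\musTor$ by replaying Proposition~\ref{levelargument} with Proposition~\ref{convrad}, the critical mass $\mu=\musTor$ by the rescaling argument of Lemma~\ref{gsmcrit} with the $L^\infty$ non-vanishing supplied by Lemma~\ref{lemmalowerbs} and the positivity of $\musTor$ from Proposition~\ref{radmu>0}, and the non-existence part by the radial version of Proposition~\ref{prop_non existence}. The only cosmetic difference is that you invoke continuity of the concave level function at $\musTor$ where the paper uses the identity $\elevel_{\To,r}(\musTor)=\frac12\lambda_1\musTor$ directly; both are valid.
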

\begin{proof}
If $\mu>\musTor$, the existence of a radial ground state follows by arguing exactly
as in the proof of Proposition~\ref{levelargument}, now relying on Proposition~\ref{convrad}
instead of \ref{conv}, with the notational changes discussed in Remark~\ref{remlevsym}.
Similarly, for the last part of the statement one can proceed as in the proof
of Proposition~\ref{prop_non existence}, choosing as $v$ a radial function.

Finally, we prove the existence of a radial ground state of mass $\musTor$, but even though
the argument is similar to that used for Lemma~\ref{gsmcrit},
some additional estimates are needed
because here there is no restriction on $p$.
Thus, set $\mu_n=\musTor+\frac 1 n$ and let  $v_n$
denote a \emph{radial} ground state of mass $\mu_n$ (which exists by the first
part of this proof).
Since $E(v_n,\To)=\elevel_{\To,r}\leq\frac 1 2\lambda_1 \mu_n$ by \eqref{sottorettarad},
Lemma~\ref{lemmalowerbs} applies and the second bound in \eqref{lowerbs}
reads
\begin{equation}
\label{dis6}
\Vert v_n\Vert_\infty^2 \,\mu_n^{\frac{p-2}{6-p}}\geq\delta_2>0.
\end{equation}
Now,
arguing
as in the proof of Lemma~\ref{gsmcrit},
one shows that the \emph{radial} functions
\begin{equation}\label{defvnun2}
u_n:=\sqrt{\frac{\musTor}{\mu_n}}\,\,v_n\in H^1_{\musTor}(\To)
\end{equation}
are a minimizing sequence for the radial ground state problem with
mass $\musTor$.
Since $\mu_n\to \musTor>0$, we find from \eqref{defvnun2} and \eqref{dis6}
\[
\liminf_n
\Vert u_n\Vert_\infty
=
\liminf_n
\Vert v_n\Vert_\infty
>0,
\]
hence a radial
ground state of mass $\musTor$ exists by Proposition~\ref{convrad}.
\end{proof}

\proof[{\bf Proof of Theorem \ref{THM 2}}]
As observed at the beginning of this section, it suffices to consider
the case where $\G=\To$. The part concerning the level function	
$\elevel_{\To,r}$ follows from Remark~\ref{remlevsym}, whereas the other
claims have been proved in Proposition~\ref{radmu>0} and Proposition~\ref{propsym}.
\endproof

\end{document}